\title{Estimates for eigenvalues of Schr\"{o}dinger operators with complex-valued potentials}
\author{
Alexandra Enblom\\
{\small Department of Mathematics}\\
{\small Linköping University}\\
{\small SE-581 83 Link\"oping, Sweden}\\
{\small \texttt{alexandra.enblom@liu.se}}\\
}
\theoremstyle{plain}
\newtheorem{thm}{Theorem}[section]
\newtheorem{cor}[thm]{Corollary}
\theoremstyle{definition}
\newtheorem{rem}[thm]{Remark}
\newtheorem{example}[thm]{Example}
\newcommand{\reals}{\ensuremath{\mathbb{R}}}
\newcommand{\complex}{\ensuremath{\mathbb{C}}}
\newcommand{\spec}[1]{\ensuremath{\sigma\left(#1\right)}}
\DeclareMathOperator{\impart}{Im}
\DeclareMathOperator{\realpart}{Re}
\DeclareMathOperator{\sign}{sgn}
\newcommand{\ud}{\,d}
\numberwithin{equation}{section}
\date{}
\begin{document}
\maketitle

\begin{abstract}
New estimates for eigenvalues of non-self-adjoint  multi-dimensional Schr\"{o}dinger operators are obtained in terms of $L_{p}$-norms of the potentials. The results extend and improve those obtained previously. In particular,  diverse versions of an assertion conjectured by Laptev and Safronov are discussed. Schr\"{o}dinger operators with slowly decaying  potentials are also  considered.
\end{abstract}

 \textbf{Keywords}. Schr\"{o}dinger operators; polyharmonic operators; complex potential; estimation of eigenvalues.

\textbf{2010 AMS Subject Classification}. Primary 47F05; Secondary 35P15; 81Q12.

\section{Introduction}\label{sec:introduction}
In this paper we discuss estimates  for eigenvalues of Schr\"{o}dinger operators with complex-valued potentials. Among  existing results on this problem  regarding non-self-adjoint  Schr\"{o}dinger operators we mention the works \cite{abramov-aslanyan-davies}, \cite{frank-laptev-seiringer}, \cite{frank}, \cite{safronov1}, \cite{safronov2}, and also \cite{davies} for an overview  on certain aspects of spectral analysis  of  non-self-adjoint  operators  mainly needed  for problems in quantum mechanics.  In \cite{abramov-aslanyan-davies} it was observed that  for the one-dimensional Schr\"{o}dinger operator $H = - \ud^{2}/\ud x^{2} + q$,    where   the potential $q$ is a complex-valued function belonging to $L_{1} (\reals)  \cap  L_{2} (\reals),$  every its eigenvalue $\lambda$ which does  not lie on the non-negative  semi-axis satisfies the following estimate
\begin{equation}\label{eq:eigenv}
| \lambda |^{1/2}  \leq \frac{1}{2}   \int_{- \infty}^{\infty}   | q (x) | \ud x.
\end{equation}
For the self-adjoint case the estimate \eqref{eq:eigenv} was pointed out previously by Keller in \cite{keller}.  In \cite{frank-laptev-seiringer} related estimates are found  for eigenvalues of Schr\"{o}dinger operators  on semi-axis with   complex-valued potentials.  Note  that, as is pointed out in \cite{frank-laptev-seiringer}, the obtained estimates are  in sense sharp for both  cases of Dirichlet and Neumann  boundary conditions.  In \cite{frank}, \cite{safronov1} (see also \cite{safronov2})  the problem  is considered for higher dimensions  case. In particular,  in \cite{frank}  estimates for eigenvalues of Schr\"{o}dinger operators with complex-valued potentials decaying  at infinity,  in a certain sense, are  obtained in terms of appropriate  weighted Lebesgue spaces norms  of potentials. 

 In this paper we mainly deal with the evaluation of  eigenvalues of multi-dimensio\-nal  Schr\"{o}dinger operators. The methods which we apply allow us to consider the Schr\"{o}dinger operators acting in one of the Lebesgue space   $L_{p} (\reals^{n})$ $(1 < p < \infty).$ We  consider the formal differential operator $- \Delta + q$ on $\reals^{n},$  where  $\Delta$ is the $n$-dimensional Laplacian and $q$ is a  complex-valued  measurable  function. Under some reasonable conditions, ensuring, in a suitable averaged sense, decaying at infinity of  the potential, there exists a closed  extension $H$ of  $- \Delta + q$  in the space  $L_{p} (\reals^{n}) $ such that its  essential spectrum $\sigma_{ess}(H)$ coincides with  the semi-axis $[0, \infty),$      and any other  point of the spectrum, i.e. not  belonging to  $\sigma_{ess}(H)$, is an isolated  eigenvalue of finite (algebraic)  multiplicity.  We take the operator  $H$ as the Schr\"{o}dinger operator  corresponding to $- \Delta + q$  in above  sense and we will be interested  to find estimates of  eigenvalues of $H$ which lie outside of the essential spectrum.  The problem  reduces to estimation of the resolvent of the unperturbed operator $H_{0}$, that is defined by $- \Delta$ in $L_{p} (\reals^{n})$ on its  domain the Sobolev space  $W_{p}^{2} (\reals^{n}),$   bordered by some suitable  operators of multiplication (cf. reasoning in Section 2). 

We begin with three dimensional  Schr\"{o}dinger operators (cf. Section \ref{sec:three dim}). In  this case the resolvent $R (\lambda; H_{0})$ of $H_{0}$  is an integral operator  with the kernel  $exp (- \mu | x - y |)/4 \pi | x - y |$, where $\mu = - i \lambda^{1/2}$, with for instance $\impart \lambda^{1/2} > 0$.  Due to this fact the evaluation of the bordered resolvent of the unperturbed  operator $H_{0}$ are made by applying  direct standard methods. 
 For the higher dimensional case the approach used in the proofs concerning  Schr\"{o}dinger operators on $\reals^{3}$  is not so convenient to apply.  Instead we propose other methods of obtaining bounds for eigenvalues. These methods involve somewhat {\em heat kernels}  associated to the Laplacian (cf. Section \ref{sec:schrodinger operators}). For it could be used the kernel 
$(4 \pi i t)^{- n/2}  exp (- |x - y|^{2}/4 i t), - \infty < t < \infty,$ representing the operator-group $U (t) = exp (- i t H_{0}), - \infty < t < \infty$, and then making use of the formula expressing the resolvent 
 $R (\lambda; H_{0})$  as the Laplace transform of  $U (t)$  (see \cite{hille-phillips}). In this way we obtain a  series of estimates for  perturbed eigenvalues. In particular, supposing  that $q = a b,$ where  $a \in  L_{r} (\reals^{n}),$   $b \in  L_{s} (\reals^{n})$  for $r, s$ satisfying 
$0 < r \leq \infty$, $p \leq s \leq \infty$,  $r^{- 1} -  s^{- 1} = 1 - 2 p^{- 1}$, $2^{- 1} - p^{- 1} \leq r^{- 1} \leq 1 - p^{- 1}$ and $ r^{- 1} + s^{- 1} < 2 n^{- 1},$  for any complex eigenvalue $\lambda$ of the Schr\"{o}dinger operator $H$ with $\impart \lambda \neq 0$, we have 
\begin{equation} \label{eq:imeinge}
| \impart \lambda |^{\alpha}   \leq  (4 \pi)^{\alpha - 1}   \Gamma (\alpha)  \| a \|_{r}    \| b \|_{s},
\end{equation}
in which  $\alpha : = 1 - n (r^{- 1} + s^{- 1})/2$ ($\Gamma$ denotes the gamma function).    An immediately consequence of this result (letting  $r  = s = 2 \gamma + n, \ \gamma > 0$)  is the estimate 
\begin{equation}\label{eq:imeinge1}
| \impart \lambda |^{\gamma}   \leq  (4 \pi)^{- n/2}   \Gamma  \biggl(\frac{\gamma}{\gamma + n/2} \biggl)^{\gamma + n/2} \int_{\reals^{n}}  | q (x) |^{\gamma + n/2}  \ud x 
\end{equation}
for $n \geq 3, \gamma > 0$.  The estimate \eqref{eq:imeinge1}  is a version of a conjecture due  to Laptev and Safronov \cite{laptev-safronov}.

Estimation of eigenvalues can be made representing {\em a priori} the resolvent of $H_{0}$ in terms of Fourier transform  (see Section \ref{sec:schrodinger operators} and \ref{sec:generalization}).  The method leads,  in particular, to the following  result. Let $1 < p < \infty,$ and let $q = a b$ with   $a \in  L_{r} (\reals^{n}),$   $b \in  L_{s} (\reals^{n})$  for $0 < r, s  \leq \infty$  satisfying 
$2^{- 1} - p^{- 1} \leq  r^{- 1} \leq  1 -  p^{- 1}$, $- 2^{- 1} + p^{- 1} < s^{- 1} \leq  p^{- 1},$  and $ r^{- 1} + s^{- 1} < 2 n^{- 1}.$    Then for any eigenvalue $\lambda \in \complex \setminus [0, \infty)$  of the Schr\"{o}dinger operator $H$ there holds  
\begin{equation} \label{eq:eig}
|  \lambda |^{\alpha - n/2}   \leq  C  \| a \|_{r}^{\alpha}    \| b \|_{s}^{\alpha},
\end{equation}
where $\alpha : = (r^{- 1}  + s^{- 1})^{- 1},$ and $C$ being a constant  of the potential (it is controlled; see Theorem \ref{thm:result5}).  Notice that for the particular case $n = 1, p = 2$ and  $r = s = 2$ one has  $\alpha = 2$ and $C = 1/2$, and the estimate  \eqref{eq:eig} reduces  to \eqref{eq:eigenv}. 
 From \eqref{eq:eig} it can be derived  estimates  for eigenvalues of Schr\"{o}dinger operators with  decaying potentials.   So,  for instance, taking $a (x) = (1 + | x |^{2})^{- \tau/2}$   $(\tau > 0)$,  under suitable restrictions on $r$ and $\tau$,  for an  eigenvalue $\lambda \in \complex \setminus [0, \infty)$ there holds 
\begin{equation}\label{eq:eigen int}
|  \lambda |^{r - n}   \leq  C  \int_{\reals^{n}}  | (1 + | x |^{2})^{\tau/2}  | q (x) |^{r} \ud x.
\end{equation}
In connection with \eqref{eq:eigen int} we note  the related results obtained in \cite{frank} and \cite{safronov1} (see also \cite{safronov2} and \cite{davies-nath}).

Finally, estimates obtained for  Schr\"{o}dinger operators can be successfully extended to  polyharmonic operators 
$$H_{q, m} = (- \Delta)^{m} + q,$$
in which (the  potential) $q$ is  a complex-valued measurable function,  and  $m$ is an arbitrary positive  real number.  For the eigenvalues   $\lambda \in \complex \setminus [0, \infty)$ of an operator of  this class it can be proved that 
\begin{equation}\label{eq:eig int}
|  \lambda |^{\gamma}   \leq  C  \int_{\mathbf{R}^{n}}    | q (x) |^{\gamma + n/2 m} \ud x
\end{equation}
for $\gamma > 0$ if $n \ge 2 m$ and $\gamma \ge 1 - n/2 m$ for $n < 2 m.$ The estimate given in \eqref{eq:eig int} is in fact a  result analogous to  the mentioned conjecture of Laptev  and Safronov \cite{laptev-safronov} raised for  Schr\"{o}dinger operators.

The paper consists of five sections: Introductions;  Preliminaries. Setting of the problem; Three dimensional  Schr\"{o}dinger operators;   Schr\"{o}dinger operators  on $\reals^{n}$; Polyharmonic operators.

\section{Preliminaries. Setting of the problem}\label{sec:preliminaries}

Consider, in the space  $L_{p} (\reals^{n})$ $(1 < p < \infty),$  the  Schr\"{o}dinger operator
\begin{equation}\label{eq:schrod}
- \Delta + q (x)
\end{equation}
with a potential $q$ being in general  a complex-valued  measurable function  on $\reals^{n}$.   We assume that the  potential $q$ admits a  factorization $q  = a  b$ with $a, b$ belonging to some Lebesgue type spaces (appropriate spaces will be  indicated in relevant  places). We denote by $H_{0}$ the operator defined by $- \Delta$ in $L_{p} (\reals^{n})$  on its domain the Sobolev space $W_{p}^{2} (\reals^{n})$,   and  let $A, B$ denote, respectively, the  operators of multiplication by $a, b$ defined in  $L_{p} (\reals^{n}) $  with the  maximal domains.  Thus, the  differential expression \eqref{eq:schrod} defines  in the space $L_{p} (\reals^{n}) $  an operator expressed as the perturbation of $H_{0}$ by $A B$.  
  In order to determine the operator,  being a closed extension of $H_{0} + A B,$  suitable for our purposes, we  need to require certain assumptions on the potential.  For we let $a$ and $b$ be functions of Stummel classes \cite{stummel} (see also \cite{jorgens-weidmann} and \cite{schechter1}),   namely 
	
\begin{equation}\label{eq:Ma}
M_{\nu, p^{'}} (a) < \infty, \quad 0 < \nu < p^{'},
\end{equation}
	
\begin{equation}\label{eq:Mb}
M_{\mu, p} (b) < \infty, \quad 0 < \mu < p,
\end{equation}	

($p^{'}$ is the conjugate exponent to $p : p^{- 1} + p^{' - 1}  = 1$), where it is denoted
$$M_{\nu, p} (u) = \sup_{x} \int_{| x - y | < 1}  | u (y) |^{p} \ | x - y |^{\nu - n} \ud y$$
for functions $u \in L_{p, loc} (\reals^{n}).$    If also the potential $q$ decays at infinity, for instance, like 

\begin{equation}\label{eq:qdecays}
 \int_{| x - y | < 1}  | q (y) |  \ud y \to 0 \quad as \quad | x | \to \infty,
\end{equation}
then the operator $H_{0} + A B  \quad (= - \Delta + q)$ admits a closed extension $H$ having  the same essential spectrum as  unperturbed operator $H_{0}$, i.e.,
$$\sigma_{ess}(H)  = \sigma_{ess} (H_{0}) \quad (=  \spec {H_{0}} = [0, \infty)).$$

 Note  that the conditions \eqref{eq:Ma} and \eqref{eq:Mb} are used to derived  boundedness  and also, together with \eqref{eq:qdecays}, compactness  domination  properties of the perturbation  (reasoning are due to Rejto \cite{rejto} and Schechter \cite{schechter2}, cf. also \cite{schechter1}; Theorem 5.1, p.116). To be more precise, due to conditions \eqref{eq:Ma} and \eqref{eq:Mb}, the bordered  resolvent $B R (z; H_{0})  A$ $(R (z; H_{0}) : = (H_{0} - z I)^{- 1}$  denotes  the resolvent of $H_{0}$)  for some (or,  equivalently, any) regular point $z$ of $H_{0}$  represents a densely defined operator  having a (unique) bounded extension, further on we denote it by $Q (z)$.    If, in addition, \eqref{eq:qdecays}, $Q (z)$ is a compact operator and, moreover, it  is small with respect to the  operator norm for sufficiently  large $| z |$.

From now on we let $H$ denote the Schr\"{o}dinger operator realized in this way in  $L_{p} (\reals^{n})$  by the differential  expression $- \Delta + q (x).$   Notice  that constructions related to that  mentioned above are widely  known in the perturbation theory. In Hilbert case space $p = 2,$ $H$,  where the potential $q$ is a real function,  represents a self-adjoint operator presenting mainly interest for spectral and scattering problems.

It turns out that there is a constraint relation between the discrete part of the spectrum of $H$ and that of  $Q (z)$ (recall $Q (z)$ is the bounded  extension of the bordered resolvent  $B R (z; H_{0}) A$),    namely, a regular  point $\lambda$ of $H_{0}$ is an eigenvalue for the extension $H$, the Schr\"{o}dinger operator, if and only if $- 1$ is an eigenvalue of  $Q (\lambda).$   This fact, which will  play a fundamental role in our  arguments, can be deduced  essentially, by corresponding  accommodation to the situation of Banach space case, using similar arguments as in the proof of Lemma 1 \cite{konno-kuroda}.   
 Consequently, for an eigenvalue $\lambda$ of the Schr\"{o}dinger operator $H$,  $\lambda$ being  a regular point of the unperturbed  operator $H_{0}$,  the operator norm of $Q (\lambda)$ must be no less  than $1$,  i.e.,
$\|  Q (\lambda) \| \geq 1.$ Namely from this operator norm evaluation we will  derive estimates for eigenvalues  of the  Schr\"{o}dinger operator $H$.
  
 Throughout the paper  there will always assumed (tacitly)  that  the conditions \eqref{eq:Ma}, \eqref{eq:Mb} and \eqref{eq:qdecays}  are satisfied.

\section{Three dimensional  Schr\"{o}dinger operators}\label{sec:three dim}

We first consider the case $n = 3$. In this case the fundamental solution of the  operator $H_{0}   - \lambda$ $(= - \Delta - \lambda)$ in $\reals^{3}$,  i.e.,  the solution  $\Phi \in S^{'}  (\reals^{3})$  of the  equation
$$- (\Delta + \lambda) \Phi (x) = \delta (x), \quad x \in \reals^{3},$$
is expressed explicitly by
$$\Phi (x) =  \frac{1}{4 \pi | x |}  e^{- \mu |x|},   \quad x \in \reals^{3},$$
where $\mu = -  i \lambda^{1/2}$   and   $\lambda^{1/2}$    is chosen so that $\impart   \lambda^{1/2}   > 0.$    Consequently,  the resolvent $R (\lambda; H_{0})  : = (H_{0} - \lambda)^{- 1}$
of $H_{0}$   is an integral operator with the  kernel 
$$ \frac{1}{4 \pi | x - y |}  e^{- \mu |x - y|},$$
that will make useful in evaluation of the bordered resolvent of $H_{0}$.

There  holds the following result.

\begin{thm}\label{thm:result1}
Let $1 < p < \infty,$  and let $q = a b$ with $a \in   L_{r} (\reals^{3} )$  and $b \in L_{s} (\reals^{3})$ for $0 < r \leq \infty,$ $p \leq  s \leq \infty$ such  that  $r^{-1} +  s^{- 1}  < 2/3.$
Then, for any eigenvalue $\lambda \in \complex \setminus [0, \infty)$   of  the Schr\"{o}dinger operator $H$, considered acting in the space $ L_{p} (\reals^{3}),$    there holds 
\begin{equation}\label{eq:res1}
| \lambda |^{(3 - \alpha)/2} \leq C  (r, s, \theta)  \| a \|_{r}^{\alpha} \ \| b \|_{s}^{\alpha}, 
\end{equation}
where $C (r, s, \theta) = (4 \pi)^{1 - \alpha}  \Gamma (3 - \alpha) (\alpha \sin (\theta/2))^{\alpha - 3}$
 ($\Gamma$ denotes the gamma function),  $\alpha : =   (1 -  r^{- 1} - s^{- 1})^{- 1}$  and $\theta : = arg   \lambda$  $(\in (0, 2 \pi)).$
 \end{thm}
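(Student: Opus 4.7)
The plan is to combine the reduction from Section~\ref{sec:preliminaries}---an eigenvalue $\lambda \in \complex \setminus [0,\infty)$ of $H$ forces $\|Q(\lambda)\| \geq 1$, where $Q(\lambda)$ is the bounded closure on $L_p(\reals^3)$ of the bordered resolvent $BR(\lambda;H_0)A$---with the explicit three-dimensional free-resolvent kernel
$$
K(x) = \frac{e^{-\mu|x|}}{4\pi|x|},\qquad \mu = -i\lambda^{1/2}.
$$
With $\lambda = |\lambda|e^{i\theta}$, $\theta \in (0,2\pi)$, and the branch $\impart\lambda^{1/2}>0$, a direct calculation gives $\realpart\mu = |\lambda|^{1/2}\sin(\theta/2) > 0$, so $K$ decays exponentially and the subsequent integrals converge.

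To bound $\|Q(\lambda)\|_{L_p \to L_p}$, I factor $Q(\lambda) = B \circ (K\ast\,\cdot\,) \circ A$ and apply a H\"older--Young--H\"older sandwich. For $f \in L_p$, H\"older gives $\|af\|_u \leq \|a\|_r \|f\|_p$ with $u^{-1} = r^{-1} + p^{-1}$; Young's convolution inequality gives $\|K \ast (af)\|_v \leq \|K\|_\alpha \|af\|_u$ with $1 + v^{-1} = \alpha^{-1} + u^{-1}$; and a final H\"older step gives $\|b\,(K\ast(af))\|_p \leq \|b\|_s\,\|K\ast(af)\|_v$ with $p^{-1} = s^{-1} + v^{-1}$. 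Eliminating $u$ and $v$ forces $\alpha^{-1} = 1 - r^{-1} - s^{-1}$, in agreement with the statement. The hypothesis $r^{-1} + s^{-1} < 2/3$ amounts to $\alpha < 3$, which is precisely the integrability threshold for $K$, while $s \geq p$ (given) and $r \geq p'$ (needed implicitly for $u,v \in [1,\infty]$) make Young's inequality applicable.

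What remains is the explicit computation of $\|K\|_\alpha$. Passing to spherical coordinates,
$$
\|K\|_\alpha^\alpha = (4\pi)^{1-\alpha}\int_0^\infty e^{-\alpha(\realpart\mu)\rho}\rho^{2-\alpha}\ud\rho = (4\pi)^{1-\alpha}\,\Gamma(3-\alpha)\,\bigl(\alpha|\lambda|^{1/2}\sin(\theta/2)\bigr)^{\alpha-3},
$$
a standard $\Gamma$-integral valid exactly when $\alpha < 3$ and $\realpart\mu > 0$. Substituting into $1 \leq \|Q(\lambda)\| \leq \|a\|_r\|b\|_s\|K\|_\alpha$ and raising to the $\alpha$-th power yields \eqref{eq:res1} with the claimed constant $C(r,s,\theta)$.

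The argument is largely mechanical, and I expect no serious obstacle. The only delicate steps are the verification of the Young-exponent constraints $u,v \in [1,\infty]$ under the stated hypotheses, and the standard extension of the sandwich estimate from the dense domain on which $BR(\lambda;H_0)A$ is initially defined to its closure $Q(\lambda)$---which is automatic, since the resulting bound depends only on $\|f\|_p$.
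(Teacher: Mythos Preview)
Your proposal is correct and follows essentially the same approach as the paper: the paper also factors $Q(\lambda)$ through the H\"older--Young--H\"older chain (with your $u,v$ written as $\beta,\gamma$), computes $\|g(\cdot;\lambda)\|_\alpha$ via spherical coordinates and the Gamma integral, and concludes from $\|Q(\lambda)\|\ge 1$. Your explicit remark that $r\ge p'$ is implicitly required for $u\ge 1$ is a point the paper passes over silently.
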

\begin{proof}
We have to show the  boundedness of  the operator $Q (\lambda)  = B R (\lambda; H_{0})  A$   and  evaluate its norm. Note that $Q (\lambda)$ is   an integral operator with kernel 
$$ \frac{1}{4 \pi | x - y |}  e^{- \mu |x - y|}   a (y) b (x).$$
In order to evaluate this integral operator  we first  observe that,  under supposed  conditions, the operator of multiplication   $A$  is bounded viewed as an operator from  $ L_{p} (\reals^{3} )$  to $ L_{\beta} (\reals^{3} )$ with some $\beta \geq 1.$   In fact, since $ a \in  L_{r} (\reals^{3} ),$  for any $u \in  L_{p} (\reals^{3} ),$  by H\"{o}lder's inequality,  we have
\begin{equation}\label{eq:anorm}
\| a u \|_{\beta}   \leq   \| a \|_{ r}   \| u \|_{p}, \quad    \beta^{- 1} =  r^{- 1} + p^{- 1}.
\end{equation}
Similarly, one can choose a $\gamma$ with  $p \leq \gamma \leq \infty,$ for which
\begin{equation}\label{eq:bnorm}
 \| b v \|_{p}    \leq   \| b \|_{ s}    \| v \|_{\gamma}, \quad  \gamma^{- 1}  +  s^{- 1}  = p^{- 1},
\end{equation}
for $v \in  L_{\gamma} (\reals^{3})$,  that means  that $B$ represents  a bounded operator from $ L_{\gamma} (\reals^{3} )$ to $ L_{p} (\reals^{3} ).$

Now, we observe that  the function 
$$g (x; \lambda)  =    \frac{1}{4 \pi | x |}  e^{- \mu |x|}, \quad  x \in  \reals^{3},$$
belongs to the class $ L_{\alpha} (\reals^{3})$ and,  moreover, 

\begin{equation}\label{eq:gnorm}
\| g  (\cdot; \lambda) \|_{\alpha}  = (4 \pi)^{(1 - \alpha)/\alpha}  (\alpha \ Re \ \mu)^{(\alpha -3)/\alpha} \  ( \Gamma (3 - \alpha) )^{1/\alpha}
\end{equation}
 In fact, by using the polar coordinates  
$\rho = | x |$,   $\omega = x / | x | \in S_{2}$
 ($S_{2}$  denotes the unit sphere in $\reals^{3}$),  one has 

$$\| g  (\cdot; \lambda) \|_{\alpha}^{\alpha}  =  \int_{\reals^{3}}   \frac{1}{(4 \pi | x |)^{\alpha}}  e^{- \alpha (\realpart \mu) |x|}  \ud x $$ 
$$=   (4 \pi)^{- \alpha}  \int_{0}^{\infty}    \int_{S_{2}}  \rho^{-\alpha +2}  e^{- \alpha (\realpart \mu) \rho}  \ud \rho \ud \omega $$

$$=    (4 \pi)^{- \alpha}   mes (S_{2})    \int_{0}^{\infty}  \rho^{- \alpha + 2}   e^{- \alpha (\realpart  \mu) \rho}  \ud \rho. $$ 
  
Since $r^{- 1} + s^{- 1} < 2/3,$   that implies  $3 - \alpha > 0$, and  since   $\realpart \mu > 0$   (so was chosen $\mu$),  the formula  (see \cite{gradshteyn-ryzhik}; 3.381.4., p.331)

\begin{equation}\label{eq:real}
\int_{0}^{\infty}  x^{\nu - 1} e^{- \mu x}   \ud x =  \mu^{-\nu}   \Gamma (\nu), \quad \realpart  \nu > 0, \quad \realpart  \mu > 0
\end{equation}
can be used, and we find
$$\int_{0}^{\infty}  \rho^{- \alpha + 2}   e^{- \alpha (\realpart  \mu) \rho}  \ud \rho  =   (\alpha \realpart \mu)^{\alpha -3}   \Gamma (3 - \alpha).$$
Since $mes (S_{2}) = 4 \pi,$  we finally obtain 
$$\| g  (\cdot; \lambda) \|_{\alpha}^{\alpha}  =   (4 \pi)^{1 - \alpha}  (\alpha \realpart  \mu)^{\alpha - 3}  \Gamma (3 - \alpha),$$

i.e.,   \eqref{eq:gnorm}.

By Young's Inequality (see for instance, \cite{bergh-lofstrom}, Theorem 1.2.2, or also \cite{folland}; Proposition 8.9a)   the operator  $R (\lambda; H_{0}),$   representing an integral operator of convolution type (with  the  kernel $g (x - y; \lambda)$),   is bounded as an operator from $ L_{\beta} (\reals^{3} )$  into $ L_{\gamma} (\reals^{3} )$ provided that
\begin{equation}\label{eq:alphabeta}
\gamma^{- 1}  + 1 =  \alpha^{- 1}  + \beta^{- 1}.
\end{equation}

Moreover,

$$\| R (\lambda; H) v \|_{\gamma}  \leq \|g \|_{\alpha}    \| v \|_{\beta}, \quad v \in   L_{\beta} (\reals^{3}).$$

 Note that \eqref{eq:alphabeta} indeed  follows immediately from the relations between  $p$, $q$, $r$  and $s$    given by \eqref{eq:anorm}  and \eqref{eq:bnorm}:
$$1 - \beta^{- 1}  + \gamma^{- 1}  = 1 -  r^{- 1}  - p^{- 1}  + p^{- 1}  -  s^{- 1} = 1 -  r^{- 1}  -  s^{- 1} = \alpha^{- 1}.$$  

The evaluations  \eqref{eq:anorm}, \eqref{eq:bnorm} and \eqref{eq:gnorm} made above imply that 

$$\| Q  (\lambda) u \|_{p}  =  \| B R (\lambda; H_{0})   A u \|_{p}  \leq \| a \|_{r} \  \| b \|_{s} \ \| g \|_{\alpha}  \| u \|_{p}$$
for each $u \in L_{p} (\reals^{3}).$  Therefore, in view of \eqref{eq:gnorm}, we have 

\begin{equation}\label{eq:Qlambda}
	\|  Q (\lambda)  \|  \leq  (4 \pi)^{(1 - \alpha)/\alpha}  (\alpha \realpart \mu)^{(\alpha - 3)/\alpha}   (\Gamma (3 - \alpha))^{1/\alpha}    \| a \|_{r}     \| b \|_{s}.
\end{equation}
	
The desired estimate \eqref{eq:res1} for the eigenvalue $\lambda$ follows   from the fact that the value on the left-hand  side of \eqref{eq:Qlambda} must be at least equal to $1$ (note that $\realpart \mu = \realpart (- i \lambda^{1/2})  =  | \lambda |^{1/2} \sin (\theta/2)$   by letting   $\lambda =  | \lambda | e^{i \theta}, \ 0 < \theta < 2 \pi$).
\end{proof}

From the estimate \eqref{eq:res1} it can be derived many particular estimates useful in applications.   We begin with the situation  when $a, b \in  L_{r} (\reals^{3})$  with $r > 3$ if $1 < p \leq 3$ and $p \leq r \leq \infty$ if $p > 3.$   In \eqref{eq:res1} we can take $s = r,$  then $r^{-1 } + s^{-1} = 2 r^{-1}  (< 2/3)$   and   $\alpha = r /(r - 2).$   In view of Theorem \ref{thm:result1}, we have the following result.

\begin{cor}\label{cor:q}
Suppose    $q = a b,$ where   $a, b \in L_{r} (\reals^{3})$  with  $r > 3$ if $1 < p \leq 3$ and $p \leq r \leq \infty$ if $p > 3$. 
  Then for  any eigenvalue $\lambda \in \complex \setminus [0, \infty)$  of the Schr\"{o}dinger operator $H$, considered in $ L_{p} (\reals^{3}),$  there holds 
\begin{equation}\label{eq:cor1}
| \lambda |^{r - 3}   \leq C (r, \theta) \ \| a \|_{r}^{r} \ \| b \|_{r}^{r}, 
\end{equation}
where  
$$C (r, \theta) = (4 \pi)^{- 2} \Gamma (2 (r - 3)/(r - 2))^{r - 2} \   ((r - 2)/r \sin (\theta/2))^{2 (r - 3)},$$
in which   $\theta = arg \lambda \ (0 < \theta < 2 \pi)$.
\end{cor}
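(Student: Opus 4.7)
The plan is to obtain the corollary as a direct specialization of Theorem \ref{thm:result1} to the symmetric case $s = r$, so essentially no new analytic ingredient is required — the task reduces to careful algebraic bookkeeping of the exponents in \eqref{eq:res1}.

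First I would verify that the hypotheses of Theorem \ref{thm:result1} are satisfied under the assumptions of the corollary. Setting $s = r$, the summability condition becomes $r^{-1} + s^{-1} = 2/r < 2/3$, which is equivalent to $r > 3$; the requirement $p \le s \le \infty$ translates into $p \le r$ (automatic when $1 < p \le 3 < r$, and imposed explicitly when $p > 3$). Thus the case split in the corollary is precisely what is needed to land inside the hypotheses of the theorem.

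Next I would compute the exponent $\alpha$. From $\alpha = (1 - r^{-1} - s^{-1})^{-1}$ with $s = r$, one gets $\alpha = r/(r-2)$, and hence
\[
3 - \alpha \;=\; \frac{2(r-3)}{r-2}, \qquad \frac{3-\alpha}{2} \;=\; \frac{r-3}{r-2}, \qquad 1 - \alpha \;=\; -\frac{2}{r-2}.
\]
Substituting into \eqref{eq:res1} yields
\[
|\lambda|^{(r-3)/(r-2)} \;\le\; (4\pi)^{-2/(r-2)}\, \Gamma\!\Bigl(\tfrac{2(r-3)}{r-2}\Bigr)\, \Bigl(\tfrac{r}{r-2}\sin(\theta/2)\Bigr)^{-2(r-3)/(r-2)} \|a\|_r^{r/(r-2)}\|b\|_r^{r/(r-2)}.
\]
Raising both sides to the power $r - 2$ moves the left-hand exponent to $r - 3$ and the $L_r$-norms to the $r$-th power. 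The constant becomes
\[
(4\pi)^{-2}\, \Gamma\!\Bigl(\tfrac{2(r-3)}{r-2}\Bigr)^{r-2}\, \Bigl(\tfrac{r-2}{r\sin(\theta/2)}\Bigr)^{2(r-3)},
\]
which is exactly $C(r,\theta)$ as stated in \eqref{eq:cor1}.

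There is no genuine obstacle here — the only thing that requires attention is keeping track of the algebraic manipulations and checking that the two sub-cases in the hypothesis ($1 < p \le 3$ with $r > 3$, versus $p > 3$ with $p \le r \le \infty$) both fit the admissible range $p \le s \le \infty$ and $r^{-1} + s^{-1} < 2/3$ from Theorem \ref{thm:result1}. Once those verifications are done, \eqref{eq:cor1} is immediate.
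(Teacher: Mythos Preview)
Your proposal is correct and follows exactly the paper's approach: the paper simply observes that setting $s=r$ gives $r^{-1}+s^{-1}=2r^{-1}<2/3$ and $\alpha=r/(r-2)$, then invokes Theorem~\ref{thm:result1}. Your write-up merely spells out the algebraic bookkeeping (computing $3-\alpha$, $1-\alpha$, and raising \eqref{eq:res1} to the power $r-2$) that the paper leaves implicit.
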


The following estimate was conjectured,  but for the case of Hilbert space $ L_{2} (\reals^{3}),$ by Laptev and Safronov \cite{laptev-safronov}.

\begin{cor}\label{cor:q1}
Let $\gamma > 0$ for $1 < p \leq 3$  and  $2 \gamma \geq p - 3$ for $p > 3$, supposing that 
$$q \in  L_{\gamma + 3/2} (\reals^{3}).$$
Then every eigenvalue 
$\lambda \in \complex \setminus [0, \infty)$  of the Schr\"{o}dinger operator $H$, considered acting in $ L_{p} (\reals^{3}),$  satisfies
\begin{equation}\label{eq:corq1}
| \lambda |^{\gamma} \leq  C (\gamma, \theta)   \int_{\reals^{3}}  | q (x) |^{\gamma + 3/2} \ud x,
\end{equation} 
where
$$C (\gamma, \theta) = (1/4 \pi \sin^{2\gamma} (\theta/2))  ((2 \gamma + 1)/(2 \gamma + 3))^{2\gamma}  \Gamma (4 \gamma/(2 \gamma + 1))^{(2\gamma + 1)/2}.$$
\end{cor}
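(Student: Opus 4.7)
The plan is to specialize Theorem \ref{thm:result1} via the canonical ``square-root'' factorization of the potential. Write $q = a b$ with $a = \sign(q)\,|q|^{1/2}$ and $b = |q|^{1/2}$, so that $|a| = |b| = |q|^{1/2}$ and
\begin{equation*}
\| a \|_{t}^{t} = \| b \|_{t}^{t} = \int_{\reals^{3}} | q(x) |^{t/2} \ud x
\end{equation*}
for every $t > 0$. To force the exponent $\gamma + 3/2$ to appear on the right-hand side of \eqref{eq:res1}, the natural choice is $r = s = 2\gamma + 3$.

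First I would verify admissibility in Theorem \ref{thm:result1}: the condition $s \geq p$ becomes $2\gamma \geq p - 3$, which is the stated hypothesis (and is automatic when $1 < p \leq 3$ since $\gamma > 0$); the condition $r^{-1} + s^{-1} = 2/(2\gamma+3) < 2/3$ reduces to $\gamma > 0$; and $q \in L_{\gamma + 3/2}(\reals^{3})$ places both $a$ and $b$ in $L_{r}(\reals^{3})$.

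Next I would substitute and simplify. With these choices one computes $\alpha = (1 - 2/(2\gamma+3))^{-1} = (2\gamma+3)/(2\gamma+1)$, hence $(3-\alpha)/2 = 2\gamma/(2\gamma+1)$ and $2\alpha/r = 2/(2\gamma+1)$. Inserting into \eqref{eq:res1} yields
\begin{equation*}
| \lambda |^{2\gamma/(2\gamma+1)} \leq C(r,s,\theta) \left( \int_{\reals^{3}} | q(x) |^{\gamma + 3/2} \ud x \right)^{2/(2\gamma+1)},
\end{equation*}
and raising both sides to the power $(2\gamma+1)/2$ produces exactly the form of \eqref{eq:corq1}.

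The only remaining task is the bookkeeping of the constant. With $1 - \alpha = -2/(2\gamma+1)$, $3 - \alpha = 4\gamma/(2\gamma+1)$, and $\alpha - 3 = -4\gamma/(2\gamma+1)$, the $(2\gamma+1)/2$-th power of $C(r,s,\theta) = (4\pi)^{1-\alpha} \Gamma(3-\alpha)(\alpha \sin(\theta/2))^{\alpha - 3}$ splits into three factors: $(4\pi)^{-1}$ from the $(4\pi)^{1-\alpha}$ term, $\Gamma(4\gamma/(2\gamma+1))^{(2\gamma+1)/2}$ from the gamma factor, and $((2\gamma+1)/(2\gamma+3))^{2\gamma}\sin^{-2\gamma}(\theta/2)$ from the power of $\alpha \sin(\theta/2)$. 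These combine to precisely the claimed $C(\gamma,\theta)$. No genuine obstacle arises beyond Theorem \ref{thm:result1} itself; the key insight is simply recognizing that $r = s = 2\gamma + 3$ with the square-root factorization converts the bilinear bound \eqref{eq:res1} into a single integral of $| q |^{\gamma + 3/2}$.
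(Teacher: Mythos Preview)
Your proposal is correct and follows essentially the same approach as the paper: the paper also uses the square-root factorization $a=|q|^{1/2}$, $b=(\sign q)|q|^{1/2}$ with $r=s=2\gamma+3$, the only cosmetic difference being that it routes through Corollary~\ref{cor:q} (the $r=s$ specialization of Theorem~\ref{thm:result1}) rather than substituting directly into \eqref{eq:res1}. Your constant computation is accurate.
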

\begin{proof}
It suffices to let $r = 2 \gamma + 3$ in \eqref{eq:cor1} and take
$$ a (x) = | q (x) |^{1/2}, \quad b (x) = (\sign  q (x))  | q (x) |^{1/2},$$
where $\sign  \ q (x) = q (x)/| q (x) |$ if $q (x) \neq 0$ and $\sign \ q (x)  = 0$  if $q (x)  = 0.$
\end{proof}  

Frank \cite{frank} also obtains a result concerning  already mentioned conjecture in case of the  Hilbert space $ L_{2} (\reals^{3})$  and with restriction $0 < r \leq 3/2.$ The proofs in \cite{frank} are based on a uniform Sobolev  inequality due to Kenig, Ruiz and Sogge \cite{kenig-ruiz-sogge}.

Another type of estimates can be obtained  directly from \eqref{eq:cor1} by involving decaying    potentials. So, for instance, if we take   $a (x) = (1 + | x |^{2} )^{- \tau/2}$  with $\tau r > 3$,  $r$   satisfying  restrictions attributed as in Corollary \ref{cor:q}, then, by using the formula (see \cite{gradshteyn-ryzhik}; 3.251.2.),
$$\int_{0}^{\infty} x^{\mu - 1}   (1 + x^{2})^{\nu - 1} \ud x =  \frac{1}{2} B (\mu/2,  1 - \nu -\mu/2), \quad \realpart \mu > 0, \quad \realpart (\nu + \mu/2)   < 1,$$
where $B (x, y)$  denotes the beta-function
$$B (x, y) =  \int_{0}^{1}  t^{x} (1  - t)^{y - 1} \ud t, \quad \realpart x > 0, \quad \realpart y > 0,$$
we can  calculate 
$$\| a \|_{r}^{r} = \int_{\reals^{3}}   ( 1  + | x |^{2} )^{- \tau r/2}  \ud x =   \int_{0}^{\infty}  \int_{S^{2}}   \rho^{2}  (1 + \rho^{2})^{- \tau r/2}   \ud \rho \ud \omega  $$
$$= 4 \pi  \int_{0}^{\infty}  \rho^{2} (1 + \rho^{2})^{- \tau r/2}    \ud \rho = 2 \pi B (3/2, \tau r/2 - 3/2),$$
and, further, taking $b (x) = (1 + | x |^{2})^{\tau/2}  q (x),$    we  obtain the following result.
\begin{cor}\label{cor:q2}
Suppose 
$$ (1 + | x |^{2})^{\tau/2}  q  \in   L_{r} (\reals^{3}),$$
where $\tau r > 3$, and $r > 3$ if $ 1 < p \leq 3$ and $p \leq r \leq \infty $   if $p > 3$.   Then every  eigenvalue $\lambda \in \complex \setminus [0, \infty)$  of the Schr\"{o}dinger operator $H$, considered acting in $ L_{p} (\reals^{3}),$  satisfies
\begin{equation}\label{eq:corq2}
| \lambda |^{r - 3} \leq  C_{1} (r, \theta)   \int_{\reals^{3}}  |  (1 + | x |^{2})^{\tau/2}    q (x) |^{r} \ud x,
\end{equation} 
where $C_{1} (r, \theta) = 2 \pi B (3/2, \tau r/2 - 3/2)  C (r, \theta)$,  $C (r, \theta)$ being determined as in \eqref{eq:cor1}.
\end{cor}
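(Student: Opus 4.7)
The plan is to apply Corollary \ref{cor:q} to the factorization $q = ab$ where
$$a(x) = (1 + |x|^{2})^{-\tau/2}, \qquad b(x) = (1 + |x|^{2})^{\tau/2} q(x).$$
This is the natural choice dictated by the integrability hypothesis on $(1+|x|^{2})^{\tau/2} q$, which gives $b \in L_{r}(\reals^{3})$ by assumption. To invoke Corollary \ref{cor:q} one needs both factors to lie in $L_{r}(\reals^{3})$, and the remaining constraint on $r$ relative to $p$ is precisely the one stated in the corollary, so no extra restriction is needed.

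The bulk of the work is computing $\|a\|_{r}^{r}$ in closed form. I would pass to polar coordinates $\rho = |x|$, $\omega = x/|x| \in S^{2}$, which reduces the integral to $4\pi \int_{0}^{\infty} \rho^{2}(1+\rho^{2})^{-\tau r/2}\, d\rho$ after accounting for $\mathrm{mes}(S^{2}) = 4\pi$. A change of variable $t = \rho^{2}$ (or direct appeal to Gradshteyn--Ryzhik 3.251.2 as cited in the paragraph preceding the corollary) converts this into a beta-function integral, yielding $\|a\|_{r}^{r} = 2\pi B(3/2,\, \tau r/2 - 3/2)$. The hypothesis $\tau r > 3$ is precisely what makes the second beta-function argument positive, guaranteeing convergence at infinity; convergence near the origin is automatic since $a$ is bounded there.

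Once $\|a\|_{r}^{r}$ has this explicit form, I would substitute into \eqref{eq:cor1} from Corollary \ref{cor:q}, which states $|\lambda|^{r-3} \le C(r,\theta)\, \|a\|_{r}^{r}\, \|b\|_{r}^{r}$. Replacing $\|b\|_{r}^{r}$ by $\int_{\reals^{3}} |(1+|x|^{2})^{\tau/2} q(x)|^{r}\, dx$ and collecting the factor $2\pi B(3/2, \tau r/2 - 3/2)$ with $C(r,\theta)$ into the single constant $C_{1}(r,\theta)$ delivers the desired inequality \eqref{eq:corq2}.

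There is essentially no obstacle here beyond bookkeeping: the corollary is a direct specialization of Corollary \ref{cor:q} to a specific factorization, and the closed-form evaluation of $\|a\|_{r}^{r}$ is a one-line computation with the beta-function. The only mild care required is checking that the hypotheses on $r$ and $p$ in Corollary \ref{cor:q} transfer verbatim and that $\tau r > 3$ is indeed the sharp condition for $a \in L_{r}(\reals^{3})$; both are immediate.
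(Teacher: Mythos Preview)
Your proposal is correct and follows exactly the paper's own argument: the same factorization $a(x)=(1+|x|^{2})^{-\tau/2}$, $b(x)=(1+|x|^{2})^{\tau/2}q(x)$, the same polar-coordinate computation of $\|a\|_{r}^{r}$ via the beta-function identity (Gradshteyn--Ryzhik 3.251.2), and the same substitution into \eqref{eq:cor1}. There is nothing to add.
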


It stands to  reason that estimates of type \eqref{eq:corq2} can be given choosing other  (weight) functions, used frequently for diverse proposes, as, for instance,  $e^{\tau | x |}$,  $| x |^{\sigma}  e^{\tau | x |}$,    $e^{\tau | x |^{2}},$  etc.. We cite \cite{frank} (see also  \cite{safronov1} and \cite{safronov2} for some related results involving  weight-functions as in Corollary \ref{cor:q2}).

\begin{rem}\label{rem:beckner}
The estimate \eqref{eq:res1}  can be improved up to  a factor $(A_{\alpha}  A_{\beta} A_{\gamma^{'}})^{3}$  if in proving of Theorem \ref{thm:result1} it would be used the sharp   form of Young's convolution inequality due to Beckner \cite{beckner}, where  $A_{\alpha},  A_{\beta}$ and  $A_{\gamma^{'}}$ are defined in  accordance with the notation $A_{p} = (p^{1/p}/p^{' 1/p^{'}})^{1/2}.$  If  it turns out  that $A_{\alpha}  A_{\beta} A_{\gamma^{'}} < 1$ as, for instance,  in case $1 < \alpha, \beta, \gamma^{'} < 2,$   one has indeed an  improvement of \eqref{eq:res1}. So it happens in \eqref{eq:res1}  for the particular case $r = s = 4.$   It can be  supposed 
 $q \in L_{2} (\reals^{3})$  and, as is easily checked  for the possible eigenvalues $\lambda$,   there holds
 \begin{equation}\label{eq:rembeck}
| \lambda | \leq \frac{1}{64 \pi^{2} \sin^{2} (\theta/2)}  \| q \|_{2}^{4}.
\end{equation}
 However, as is seen, $\alpha = 2$, $\beta = \gamma^{'} = 4/3,$ hence  the constant in  \eqref{eq:rembeck}  can be improved by the  factor   
 $ 2^{3} \cdot 3^{- 3/4}.$
\end{rem}

\section{ Schr\"{o}dinger operators in $\reals^{n}$}\label{sec:schrodinger operators}

\textbf{1.} If $n > 3$, the method used in  the proof of Theorem \ref{thm:result1}  is certainly applicable, actually with major difficulties. For the general  case the fundamental solution  $\Phi (x)$  of the Laplacian $- \Delta,$  and therefore the  kernel of the resolvent $R (\lambda; H_{0}),$   is expressed by   Bessel's functions  (see, for instance, \cite{berezin-shubin}).    Of course,  the asymptotic formula

$$\Phi (x)  = c | x |^{-(n - 1)/2} e^{- \mu | x |} \ (1 + o (1)), \quad | x | \to \infty,$$

with  $c > 0$ and $\realpart \mu > 0,$  is  useful  in that work,  however we have not use this fact.
 Nevertheless, an estimate related to \eqref{eq:res1}  can be  obtained for  the general case $n \geq 3$  by using  the following  integral  representation of   the free Green  function 

\begin{equation}\label{eq:green}
g (x - y;  \lambda) = (4 \pi)^{- n/2}   \int_{0}^{\infty}
 e^{\lambda t}   e^{- | x - y |^{2}/4 t}  t^{- n/2}  \ud t, \quad \realpart \lambda < 0.
\end{equation}

In other words we use the fact that the resolvent  $R (\lambda; H_{0})$  can be represented  as a convolution operator  with the kernel \eqref{eq:green},   namely

$$(R (\lambda; H_{0}) u) (x)   =  \int_{\reals^{n}}   g   (x - y;  \lambda)  u (y)  \ud y, \quad  \realpart \lambda < 0.$$

As in the previous subsection we suppose that the potential $q$ is factorized as $q = a b,$ where     $a \in  L_{r} (\reals^{n})$  and $b \in  L_{s} (\reals^{n})$   with $0 < r \leq \infty$   and $p \leq s < \infty,$  and let $A, B$ denote the operators of multiplication  by $a, b$, respectively. By similar arguments  to those used in the proof of Theorem \ref{thm:result1},  one can obtain corresponding evaluations for  $\reals^{n}$ exactly as \eqref{eq:anorm} and \eqref{eq:bnorm}. Accordingly, $A$ can be viewed as a bounded operator  from $L_{p} (\reals^{n})$ to $L_{\beta} (\reals^{n})$  and, respectively, $B$ from $L_{\gamma} (\reals^{n})$ to $L_{p} (\reals^{n})$, where
\begin{equation}\label{eq:cond}
\beta^{- 1}  =  r^{- 1} + p^{- 1}, \quad p^{- 1} = s^{- 1} + \gamma^{- 1}.
\end{equation}

Now, we take an $\alpha \geq 1$  such that 
\begin{equation}\label{eq:cond1}	
\alpha^{- 1}  + \beta^{- 1} = \gamma^{- 1} + 1  
\end{equation}
and find conditions under which the kernel  function $g (x; \lambda)$    belongs to the space  $ L_{\alpha}   (\reals^{n}).$    By Minkowski's inequality we have 

$$\| g (\cdot ; \lambda) \|_{\alpha}  =   \biggl(  \int_{\reals^{n}}  \biggl|  (4 \pi)^{- n/2} \int_{0}^{\infty}   e^{\lambda t}   e^{- | x |^{2}/4 t} \  t^{- n/2}  \ud t  \biggl|^{\alpha}  \ud x    \biggl)^{1/\alpha}$$

$$\leq   (4 \pi)^{- n/2} \int_{0}^{\infty}   \biggl(  \int_{\reals^{n}}  \biggl|  e^{\lambda t}   e^{- | x |^{2}/4 t} \  t^{- n/2}    \biggl|^{\alpha}  \ud x    \biggl)^{1/\alpha}  \ud t $$

$$=  (4 \pi)^{- n/2} \int_{0}^{\infty}   \biggl(  \int_{\reals^{n}}   e^{- \alpha | x |^{2}/4 t}   \ud x    \biggl)^{1/\alpha}  \  e^{(\realpart \lambda) t}     t^{- n/2}  \ud t,$$

and since
$$\int_{\reals^{n}}   e^{- \alpha | x |^{2}/4 t}   \ud x   =   (4 \pi t/\alpha)^{n/2},$$
it follows
 $$\| g (\cdot; \lambda) \|_{\alpha}   =    (4 \pi)^{- n/2} \int_{0}^{\infty}   (4 \pi t/\alpha)^{n/2 \alpha}    t^{- n/2}  e^{(\realpart \lambda) t} \ud t $$
$$=   (4 \pi)^{- n/2  \alpha^{'}}  \alpha^{- n/2 \alpha}  \int_{0}^{\infty}    t^{- n/2 \alpha^{'}}  e^{(\realpart \lambda) t} \ud t.$$

If $\alpha$ is chosen so that 
\begin{equation}\label{eq:alpha}
- \frac{n}{2 \alpha^{'}}  + 1  > 0, \quad  i.e. \quad  \alpha <  \frac{n}{n - 2},
\end{equation}
it can be applied the formula \eqref{eq:real}, and we obtain
 
$$\| g (\cdot; \lambda) \|_{\alpha}   \leq    (4 \pi)^{- n/2  \alpha^{'}}   \alpha^{- n/2  \alpha}  | \realpart \lambda |^{- 1 + n/2  \alpha^{'}}  \Gamma (1 - n/2  \alpha^{'}).$$

By Young's inequality we have
\begin{equation}\label{eq:young}
\| R  (\lambda; H_{0}) v \|_{\gamma}   \leq   \| g (\cdot; \lambda) \|_{\alpha} \ \| v \|_{\beta}, \quad  v \in  L_{p} (\reals^{n})
\end{equation}
provided that \eqref{eq:cond1}.

Thus, under supposed conditions, we obtain the   following  estimation
$$\| Q (\lambda)   \|  \leq    (4 \pi)^{- n/2  \alpha^{'}}   \alpha^{- n/2  \alpha}  | \realpart \lambda |^{- 1 + n/2  \alpha^{'}}  \Gamma (1 - n/2  \alpha^{'})   \| a \|_{r}  \  \| b \|_{s},$$
and, therefore, for each $\lambda \in  \complex$ with $\realpart \lambda < 0$  such  that  
$\| Q (\lambda) \|  \geq 1,$
in particular,  for an eigenvalue   of the Schr\"{o}dinger operator  $H,$  there holds the estimation
$$| \realpart \lambda |^{1 - n/2  \alpha^{'}}   \leq  (4 \pi)^{- n/2  \alpha^{'}}   \alpha^{- n/2  \alpha}   \Gamma (1 - n/2  \alpha^{'})   \| a \|_{r}  \  \| b \|_{s}.$$ 

Eliminating $\beta$ and $\gamma$  with  taking  into account (4.2) and (4.3), we see that  $\alpha  = (1 -  r^{- 1} -  s^{- 1})^{- 1},$
and, due to  (4.4),  with the   restriction
$r^{- 1}  + s^{- 1}  < 4 n^{- 1}. $

We have proved the following  result.
\begin{thm}\label{thm:result2}
Let  $n \geq 3,  1 < p < \infty$  and let     $q  = a b,$ where 
$a \in  L_{r}   (\reals^{n}),  b \in   L_{s}   (\reals^{n})$  with  $0 < r \leq \infty, p \leq s \leq \infty$  and  $r^{- 1}  + s^{- 1}  < 2 n^{- 1}.$   Then,   for any  eigenvalue $\lambda$   with    $\realpart \lambda < 0$  of the  Schr\"{o}dinger operator $H$, considered acting in the space $ L_{p}   (\reals^{n})$, there   holds 
\begin{equation}\label{eq:res2}
| \realpart \lambda |^{1 - n/2 \alpha^{'}}   \leq  C (n, r, s) \| a \|_{r}  \| b \|_{s},
\end{equation}
where $C (n, r, s) = (4 \pi)^{- n/2 \alpha^{'}} \alpha^{- n/2 \alpha} \Gamma (1 - n/2 \alpha^{'}), \  \alpha = (1 -  r^{- 1}  - s^{- 1})^{- 1}.$
\end{thm}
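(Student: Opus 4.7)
The plan is to reduce the spectral bound to an operator-norm estimate for the bordered resolvent $Q(\lambda) = BR(\lambda;H_0)A$, using the structural fact, recorded in Section \ref{sec:preliminaries}, that at any eigenvalue $\lambda$ of $H$ lying in the resolvent set of $H_0$ one must have $\|Q(\lambda)\| \geq 1$. So the task is to produce, for $\operatorname{Re}\lambda < 0$, an upper bound on $\|Q(\lambda)\|_{L_p \to L_p}$ that decays in $|\operatorname{Re}\lambda|$, and then read off the estimate \eqref{eq:res2}.

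First I would factor the action of $Q(\lambda)$ through an intermediate pair of Lebesgue spaces: by H\"older's inequality, multiplication by $a$ maps $L_p(\reals^n)$ boundedly into $L_\beta(\reals^n)$ with $\beta^{-1}=r^{-1}+p^{-1}$ and $\|A\| \leq \|a\|_r$; symmetrically, multiplication by $b$ maps $L_\gamma(\reals^n)$ boundedly into $L_p(\reals^n)$ with $\gamma^{-1}+s^{-1}=p^{-1}$ and $\|B\| \leq \|b\|_s$. What remains is to bound $R(\lambda;H_0)$ as an operator from $L_\beta$ into $L_\gamma$. Since $R(\lambda;H_0)$ is a convolution with kernel $g(\cdot;\lambda)$, Young's convolution inequality will do the job as soon as we know $g(\cdot;\lambda)\in L_\alpha(\reals^n)$ with the Young relation $\alpha^{-1}+\beta^{-1}=\gamma^{-1}+1$. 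Eliminating $\beta,\gamma$ via the H\"older relations gives $\alpha^{-1}=1-r^{-1}-s^{-1}$.

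The main computational step is to estimate $\|g(\cdot;\lambda)\|_\alpha$ using the heat-kernel representation \eqref{eq:green}, which is where the hypothesis $\operatorname{Re}\lambda<0$ is essential. I would apply Minkowski's integral inequality to interchange the $L_\alpha(\reals^n)$ norm with the $t$-integral, then evaluate the Gaussian $L_\alpha$-norm explicitly as $(4\pi t/\alpha)^{n/2\alpha}$, producing
\[
\|g(\cdot;\lambda)\|_\alpha \leq (4\pi)^{-n/2\alpha'}\alpha^{-n/2\alpha}\int_0^\infty t^{-n/2\alpha'} e^{(\operatorname{Re}\lambda)t}\,dt.
\]
The $t$-integral converges exactly when $n/2\alpha' < 1$, i.e.\ $\alpha < n/(n-2)$, which in terms of $r,s$ is the stated hypothesis $r^{-1}+s^{-1}<2n^{-1}$; under this condition formula \eqref{eq:real} evaluates it as $|\operatorname{Re}\lambda|^{-1+n/2\alpha'}\Gamma(1-n/2\alpha')$.

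Combining the three bounds yields
\[
\|Q(\lambda)\| \leq (4\pi)^{-n/2\alpha'}\alpha^{-n/2\alpha}\Gamma(1-n/2\alpha')\,|\operatorname{Re}\lambda|^{-1+n/2\alpha'}\|a\|_r\|b\|_s,
\]
and imposing $\|Q(\lambda)\|\geq 1$ gives \eqref{eq:res2}. The only genuine obstacle I anticipate is bookkeeping: one has to verify that the range restrictions on $r,s,p$ in the hypothesis are compatible with $\beta,\gamma\geq 1$ needed for H\"older and Young and with $\alpha\geq 1$ needed for Minkowski, and that the Young exponent relation is automatically consistent with the H\"older relations (as is computed in the three-dimensional case in Section \ref{sec:three dim}). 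Once this compatibility is checked, the rest is a direct chaining of $L^p$ estimates.
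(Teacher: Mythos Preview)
Your proposal is correct and follows essentially the same route as the paper's own argument: factor $Q(\lambda)=BR(\lambda;H_0)A$ through $L_\beta$ and $L_\gamma$ via H\"older, estimate $\|g(\cdot;\lambda)\|_\alpha$ from the heat-kernel representation \eqref{eq:green} using Minkowski's integral inequality and the Gaussian integral, apply Young's convolution inequality, and conclude by invoking $\|Q(\lambda)\|\geq 1$. The computation of the exponents and of the constant $C(n,r,s)$ matches the paper's exactly.
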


The following consequences of Theorem \ref{thm:result2} are natural extensions of the corresponding results given by Corollaries \ref{cor:q}, \ref{cor:q1} and \ref{cor:q2}.
\begin{cor}\label{cor:res2}
Suppose $q = a b,$ where $a \in  L_{r}   (\reals^{n})$ with $r > n$ if $1 < p \leq n$  and $p \leq r \leq \infty$ if $p > n$.  Then every eigenvalue $\lambda$ with $\realpart \lambda < 0$ of the Schr\"{o}dinger operator $H$, considered acting in  $ L_{p}   (\reals^{n})$, satisfies 
\begin{equation}\label{eq:corres21}
| \realpart \lambda |^{r - n}   \leq  C (n, r) \| a \|_{r}^{r}  \| b \|_{r}^{r},
\end{equation}
where $C (n, r) = (4 \pi)^{- n}  (1 - 2 r^{- 1} )^{n(r - 2)/2}   \Gamma (1 - n r^{- 1})^{r}.$
\end{cor}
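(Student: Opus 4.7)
The plan is to deduce this corollary directly by specializing Theorem \ref{thm:result2} to the case $s=r$, so no new analytic work is required; only a careful bookkeeping of exponents and verification of hypotheses.

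First I would verify that the hypotheses of Theorem \ref{thm:result2} are met. Setting $s = r$, the condition $r^{-1}+s^{-1}=2/r<2/n$ is equivalent to $r>n$, which is assumed in both regimes ($r>n$ when $1<p\leq n$; $r\geq p > n$ when $p>n$). Likewise, the requirement $p\leq s\leq \infty$ follows automatically: if $p>n$ it is hypothesized directly, and if $1<p\leq n$ then $r>n\geq p$.

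Next I would compute the exponents appearing in \eqref{eq:res2}. With $r=s$ one has
\begin{equation*}
\alpha = (1-2r^{-1})^{-1}=\frac{r}{r-2},\qquad \alpha' = \frac{\alpha}{\alpha-1}=\frac{r}{2},
\end{equation*}
so that $n/(2\alpha')=n/r$ and consequently $1-n/(2\alpha')=(r-n)/r$. The estimate \eqref{eq:res2} then reads
\begin{equation*}
|\realpart \lambda|^{(r-n)/r}\leq (4\pi)^{-n/r}\alpha^{-n/(2\alpha)}\Gamma(1-n/r)\,\|a\|_r\|b\|_r.
\end{equation*}

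Finally I would raise both sides to the power $r$. Since $r/\alpha=r-2$, the factor $\alpha^{-n/(2\alpha)}$ becomes
\begin{equation*}
\alpha^{-nr/(2\alpha)} = \alpha^{-n(r-2)/2} = \left(1-2r^{-1}\right)^{n(r-2)/2},
\end{equation*}
which is precisely the middle factor in the stated constant $C(n,r)$. Collecting terms yields \eqref{eq:corres21}. There is no real obstacle here; the only place to be attentive is the arithmetic that turns $\alpha^{-n/(2\alpha)}$ raised to the $r$th power into $(1-2r^{-1})^{n(r-2)/2}$ and confirms that the Gamma factor becomes $\Gamma(1-n/r)^r$.
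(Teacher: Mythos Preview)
Your proposal is correct and follows exactly the approach the paper intends: the paper states Corollary~\ref{cor:res2} as a ``natural extension'' of Corollary~\ref{cor:q}, obtained by specializing Theorem~\ref{thm:result2} to $s=r$, without giving further details. Your verification of the hypotheses and your exponent bookkeeping (in particular $\alpha=r/(r-2)$, $\alpha'=r/2$, and the identity $\alpha^{-nr/(2\alpha)}=(1-2r^{-1})^{n(r-2)/2}$) are all correct.
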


Corollary \ref{cor:res2} in turn implies the conjuncture for the case $\reals^{n}$ raised by  Laptev and Safronov in \cite{laptev-safronov}. For it suffices to  let $r = 2 \gamma + n$ with suitable $\gamma$.
\begin{cor}\label{cor:res22}
Let $\gamma  > 0$ if $1 < p \leq n$ and $2 \gamma \geq  p - n$ if $p > n.$ Suppose
$$q   \in  L_{\gamma + n/2}   (\reals^{n}).$$
Then every eigenvalue $\lambda$ with $\realpart \lambda < 0$  of the Schr\"{o}dinger operator $H$, considered acting in  $ L_{p}   (\reals^{n})$, satisfies 
\begin{equation}\label{eq:corres22}
| \lambda |^{\gamma}   \leq  C (n, \gamma, \theta)  \int_{\reals^{n}}  | q (x) |^{\gamma + n/2}  \ud x, 
\end{equation} 
where
$$ C (n, \gamma, \theta)  =   \frac{1}{(4 \pi)^{n/2} \cos^{2 \gamma} \theta}   \biggl(  \frac{2 \gamma + n - 2}{2 \gamma + n} \biggl)^{n (2 \gamma + n - 2)/4}    \Gamma  \biggl(  \frac{2 \gamma}{2 \gamma + n} \biggl)^{\gamma + n/2}, $$
$\pi - \theta = arg \lambda$  \quad    $(- \pi/2 < \theta < \pi/2).$
\end{cor}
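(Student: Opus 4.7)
The plan is to reduce the corollary to Corollary \ref{cor:res2} by invoking the standard square-root factorization of $q$, exactly as was done in the three-dimensional case (Corollary \ref{cor:q1}). Specifically, I would set $r := 2\gamma + n$ and take
\begin{equation*}
a(x) := |q(x)|^{1/2}, \qquad b(x) := (\sign q(x))\,|q(x)|^{1/2},
\end{equation*}
with $\sign q(x) = q(x)/|q(x)|$ when $q(x) \neq 0$ and $0$ otherwise, so that $q = ab$ and $\|a\|_{r}^{r} = \|b\|_{r}^{r} = \int_{\reals^{n}} |q(x)|^{\gamma + n/2}\,dx$.

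First I would verify that the hypotheses of Corollary \ref{cor:res2} are met with this choice of $r$. If $1 < p \leq n$, the assumption $\gamma > 0$ translates into $r = 2\gamma + n > n$; if instead $p > n$, the assumption $2\gamma \geq p - n$ translates into $p \leq r$, which is exactly the range required there. The integrability $q \in L_{\gamma + n/2}(\reals^{n}) = L_{r/2}(\reals^{n})$ guarantees $a,b \in L_{r}(\reals^{n})$.

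Next, I would apply the estimate \eqref{eq:corres21} from Corollary \ref{cor:res2} with this factorization and with $r = 2\gamma + n$. Since $r - n = 2\gamma$, this yields
\begin{equation*}
|\realpart \lambda|^{2\gamma} \leq C(n, 2\gamma+n) \biggl(\int_{\reals^{n}} |q(x)|^{\gamma + n/2}\,dx\biggr)^{2},
\end{equation*}
and taking square roots, after substituting $r = 2\gamma + n$ into the explicit formula
$C(n,r) = (4\pi)^{-n}(1 - 2r^{-1})^{n(r-2)/2}\Gamma(1 - nr^{-1})^{r}$,
transforms $\sqrt{C(n, 2\gamma+n)}$ precisely into the factor $(4\pi)^{-n/2}\bigl((2\gamma+n-2)/(2\gamma+n)\bigr)^{n(2\gamma+n-2)/4}\Gamma(2\gamma/(2\gamma+n))^{\gamma+n/2}$ appearing in the statement.

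The last step is the passage from $|\realpart \lambda|$ to $|\lambda|$. Writing $\lambda = |\lambda|e^{i(\pi - \theta)}$, the condition $\realpart\lambda < 0$ is equivalent to $-\pi/2 < \theta < \pi/2$, and $\realpart\lambda = -|\lambda|\cos\theta$, so $|\realpart\lambda|^{2\gamma} = |\lambda|^{2\gamma}\cos^{2\gamma}\theta$; dividing through produces the $\cos\theta$-weighted constant $C(n,\gamma,\theta)$. There is no real analytic obstacle here: the entire argument is a direct specialization of Corollary \ref{cor:res2} together with a routine bookkeeping of exponents, and the only thing requiring care is checking that the powers of $(2\gamma+n-2)/(2\gamma+n)$, $\Gamma(2\gamma/(2\gamma+n))$ and $\cos\theta$ line up correctly after taking the square root.
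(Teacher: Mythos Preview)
Your approach is exactly the one the paper uses: the paper simply says ``it suffices to let $r = 2\gamma + n$ with suitable $\gamma$'' and leaves the verification to the reader, so your reduction to Corollary~\ref{cor:res2} via the square-root factorization $a=|q|^{1/2}$, $b=(\sign q)|q|^{1/2}$ is precisely what is intended, and the checking of the range of $r$ is correct.

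One small caveat on the bookkeeping you flag at the end: if you actually carry out the last step, from $|\realpart\lambda|^{2\gamma}=|\lambda|^{2\gamma}\cos^{2\gamma}\theta$ you get, \emph{after} the square root, a factor $\cos^{-\gamma}\theta$ rather than the $\cos^{-2\gamma}\theta$ printed in the constant $C(n,\gamma,\theta)$. This is a misprint in the stated constant, not a defect in your argument.
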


Next, we let $a (x) = (1 + | x |^{2})^{- \tau/2}$  and $b (x) =   (1 + | x |^{2})^{ \tau/2} q (x)$ for some $\tau > 0.$  It is  seen that, when $\tau r > n$, one has  $a \in  L_{r}   (\reals^{n})$  and, moreover,
$$ \| a \|_{r}^{r}   =   \pi^{n/2}   \Gamma \biggl( \frac{\tau r - n}{2}\biggl)\biggl/ \Gamma \biggl(\frac{\tau r}{2}\biggl).$$
For we apply similar arguments as in the proof of Corollary \ref{cor:q2} and use the relation   between beta and gamma functions (cf. \cite{gradshteyn-ryzhik}; 8.384.1.).

In view of Corollary \ref{cor:res2} the following result hold true.
\begin{cor}\label{cor:corres3}
Suppose 
$$(1 + | x |^{2})^{\tau/2} q  \in  L_{r}   (\reals^{n}),$$
  where  $\tau r > 0$,  and  $r > n$ if $1 < p \leq n$  and $p \leq r \leq \infty$ if $p > n$. 
 Then every eigenvalue $\lambda$ with $\realpart \lambda < 0$ of the Schr\"{o}dinger operator $H$, considered acting in  $ L_{p}   (\reals^{n})$, satisfies 
\begin{equation}\label{eq:corres2}
| \realpart \lambda |^{r - n}   \leq  C_{1} (n, r)   \int_{\reals^{n}}  | (1 + | x |^{2} )^{\tau/2}  q (x) |^{r}  \ud x,
\end{equation}
where $$C_{1} (n, r) =  \pi^{n/2}   C (n, r)   \Gamma \biggl(\frac{\tau r - n}{2}\biggl)\biggl/ \Gamma \biggl(\frac{\tau r}{2}\biggl),$$
$C (n, r)$ being determined as in \eqref{eq:corres21}.
\end{cor}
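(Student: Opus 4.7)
The plan is to deduce this from Corollary \ref{cor:res2} by means of the factorization
\[
q = a\,b, \qquad a(x) = (1+|x|^{2})^{-\tau/2}, \quad b(x) = (1+|x|^{2})^{\tau/2}\,q(x).
\]
With this choice, $b\in L_{r}(\reals^{n})$ holds by the standing hypothesis $(1+|x|^{2})^{\tau/2} q\in L_{r}(\reals^{n})$, while the restrictions on $r$ relative to $p$ in Corollary \ref{cor:res2} are the same as those imposed here. The only real work is to check that $a\in L_{r}(\reals^{n})$ under the hypothesis $\tau r>n$ (which I believe is the intended reading, since the condition written as $\tau r>0$ does not ensure integrability) and to compute $\|a\|_{r}$ in closed form so as to fold it into the constant $C_{1}(n,r)$.

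Next I would evaluate $\|a\|_{r}^{r}$ by passing to polar coordinates $\rho=|x|$, $\omega\in S_{n-1}$, getting
\[
\|a\|_{r}^{r} = \operatorname{mes}(S_{n-1})\int_{0}^{\infty} \rho^{\,n-1}\,(1+\rho^{2})^{-\tau r/2}\,\ud\rho,
\]
and then applying the substitution $t=\rho^{2}$ to turn the one-dimensional integral into a beta integral, exactly as was done in the proof of Corollary \ref{cor:q2}. Using $\operatorname{mes}(S_{n-1}) = 2\pi^{n/2}/\Gamma(n/2)$ together with $B(x,y)=\Gamma(x)\Gamma(y)/\Gamma(x+y)$ (cf.\ \cite{gradshteyn-ryzhik}; 8.384.1) yields the expression $\|a\|_{r}^{r} = \pi^{n/2}\,\Gamma((\tau r-n)/2)/\Gamma(\tau r/2)$ already displayed just before the statement. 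Note that the hypothesis $\tau r>n$ is precisely what guarantees convergence at infinity and positivity of the gamma-factor argument.

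The conclusion is then immediate: Corollary \ref{cor:res2} applied to the factorization $q=ab$ gives, for any eigenvalue $\lambda$ with $\realpart\lambda<0$,
\[
|\realpart\lambda|^{r-n} \;\leq\; C(n,r)\,\|a\|_{r}^{r}\,\|b\|_{r}^{r},
\]
and substituting the explicit value of $\|a\|_{r}^{r}$ together with
\[
\|b\|_{r}^{r} = \int_{\reals^{n}} \bigl|(1+|x|^{2})^{\tau/2} q(x)\bigr|^{r}\,\ud x
\]
produces \eqref{eq:corres2} with the stated constant $C_{1}(n,r)$.

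There is no real obstacle: the argument is a direct application of Corollary \ref{cor:res2} to a suitable weight/coweight split of $q$, and the only technical ingredient is the standard beta-function computation of the $L_{r}$-norm of a Bessel-type weight. The one point that requires care is the integrability condition on $a$, which should read $\tau r>n$ (needed both for $a\in L_{r}(\reals^{n})$ and to keep $\Gamma((\tau r-n)/2)$ finite); this should be flagged as the operative restriction.
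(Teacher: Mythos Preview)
Your proposal is correct and follows exactly the paper's approach: the paper also sets $a(x)=(1+|x|^{2})^{-\tau/2}$, $b(x)=(1+|x|^{2})^{\tau/2}q(x)$, computes $\|a\|_{r}^{r}=\pi^{n/2}\Gamma((\tau r-n)/2)/\Gamma(\tau r/2)$ via the beta--gamma identity as in Corollary~\ref{cor:q2}, and then invokes Corollary~\ref{cor:res2}. Your remark that the hypothesis should read $\tau r>n$ rather than $\tau r>0$ is also in agreement with the paper, which states this condition explicitly in the text preceding the corollary.
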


\begin{rem}\label{rem:res2}
By applying  the sharp form of  Young's inequality \cite{beckner} the estimation \eqref{eq:res2}  refines  by the factor $(A_{\alpha}  A_{\beta}  A_{\gamma^{'}})^{n},$   where  the constant   $A_{\alpha}$, $A_{\beta}$ and   $A_{\gamma^{'}}$  are defined like  in the Remark \ref{rem:beckner}. 
\end{rem}

The following example  is used for the purpose of illustration the above statement, although Schr\"{o}dinger operators with potentials belonging to the  space $ L_{n}   (\reals^{n})$,  as is assumed, occur  in certain situations important for applications.
\begin{example}\label{ex:1}
Let $n \geq 3, p = 2,$ and suppose $ q  \in  L_{n}   (\reals^{n}).$  Put $r = s = 2 n,$ and let $a (x ) = | q (x) |^{1/2},  b (x) = q (x)/| q (x) |^{1/2}.$   Then, by (4.7),  for  eigenvalues $\lambda$ with $\realpart \lambda < 0$  of $H$ there holds
\begin{equation}\label{eq:ex1}
| \realpart \lambda | \leq C \| q \|_{n}^{2}
\end{equation}
 with a constant $C$  depending only on $n$, namely, $C = 4^{- 1} (1 - n^{- 1})^{n - 1}.$  However, in this case, $\alpha = n/(n + 1)$   and $\beta = \gamma^{'} = 2 n/(n + 1),$   hence the estimate \eqref{eq:ex1} also holds true with the constant  $C = n (1 - n)^{n - 1}/(n + 1)^{n + 1}$  provided that 
$$(A_{\alpha} A_{\beta} A_{\gamma^{'}})^{n} = \frac{2}{\sqrt{n}} \biggl( \frac{n}{n + 1} \biggl)^{(n + 1)/2},$$
as is easily checked. Obviously,  $A_{\alpha} A_{\beta} A_{\gamma^{'}} < 1.$
\end{example}

 \textbf{2.}   In the previous argument somewhat it was  involved the {\em heat kernel}  associated to  the Laplacian on $\reals^{n}.$  In fact  it could  be equivalently used the kernel 
\begin{equation}\label{eq:heatkernel}
h (x, y; t)  =  (4 \pi t)^{- n/2}  e^{- | x - y |^{2}/4 t}, \ t > 0,
\end{equation}
representing the (one-parameter) semi-group  $e^{- t H_{0}}   (0 \leq t < \infty).$  More exactly,   $e^{- t H_{0}}$  is represented by the integral operator with  the kernel \eqref{eq:heatkernel}, i.e.,
\begin{equation}\label{eq:intkernel}
(e^{- t H_{0}} u)  (x)  =  (4 \pi t)^{- n/2}  \int_{\reals^{n}}     e^{- | x - y |^{2}/4 t}   u (y) \ud y, \quad t > 0.
\end{equation}

The arguments similar to those used in proving  Theorem  \ref{thm:result2}  can be  applied  to obtain (under  suitable conditions) the estimate 
$$\| B  e^{- t H_{0}}  A u \|_{p}  \leq   (4 \pi t)^{- n/2  \alpha^{'}}   \alpha^{- n/2  \alpha}   \| a \|_{r}   \| b \|_{s} \| u \|_{p}, \quad u \in L_{p} (\reals^{n}).$$

 Then, from  the formula expressing the resolvent  $R (\lambda; H_{0})$ as the Laplace transform of the   semi-group  $e^{- t H_{0}}$  (see, for instance, \cite{hille-phillips}), i.e.,

\begin{equation}\label{eq:semigroup}
R (\lambda; H_{0}) =  \int_{0}^{\infty}   e^{\lambda t }    e^{- t H_{0}} \ud t, \quad \realpart  \lambda < 0,
\end{equation}
  we can further  estimate 
$$\| B R (\lambda; H_{0}) A  \| \leq   \int_{0}^{\infty}   e^{(\realpart \lambda) t}   \| B e^{- t H_{0}} A \| \ud t   $$
$$\leq  (4 \pi)^{- n/2  \alpha^{'}}   \alpha^{- n/2  \alpha}  \| a \|_{r}  \  \| a \|_{s}   \int_{0}^{\infty}  t^{- n/2 \alpha^{'}}  e^{(\realpart \lambda) t}  \ud t $$
$$\leq  (4 \pi)^{- n/2  \alpha^{'}}   \alpha^{- n/2  \alpha}   | \realpart \lambda |^{- 1 + n/2  \alpha^{'}} \ \Gamma (1 - n/2  \alpha^{'})   \| a \|_{r}    \| b \|_{s},$$
i.e.,
$$\| B R (\lambda; H_{0}) A  \| \leq  (4 \pi)^{- n/2  \alpha^{'}}   \alpha^{- n/2  \alpha}   | \realpart \lambda |^{- 1 + n/2  \alpha^{'}} \ \Gamma (1 - n/2  \alpha^{'})   \| a \|_{r}    \| b \|_{s},$$

and, thus, we come to the  same estimate as in \eqref{eq:res2}.

The next result concerns evaluation of  the  imaginary  part for a complex  eigenvalue $\lambda$ of $H.$  
 
\begin{thm}\label{thm:result3}
Let $n \geq 3,$  $1 < p < \infty,$  and let $q = a b,$  where 
$a \in L_{r} (\reals^{n}),   b \in  L_{s} (\reals^{n})$   for $r, s$ satisfying  
  $0 < r \leq  \infty,    \quad p  \leq s \leq  \infty,$   $ r^{- 1}  -  s^{- 1} = 1 - 2 p^{- 1},  \quad  2^{- 1} - p^{- 1} \leq  r^{- 1} \leq 1 - p^{- 1}$   and $r^{- 1} + s^{- 1} < 2 n^{- 1}.$  
Then, for any complex  eigenvalue $\lambda$   with   $\impart \lambda \neq 0$      of  the   Schr\"{o}dinger operator $H$,  considered acting in  the space $ L_{p} (\reals^{n})$,  there holds  
\begin{equation}\label{eq:res3}
 | \impart \lambda |^{\alpha}  \leq  (4 \pi)^{\alpha - 1}    \Gamma  ( \alpha)      \| a \|_{r}       \| b \|_{s},
\end{equation}
where $ \alpha  = 1  -   n (r^{- 1} + s^{- 1})/2.$
\end{thm}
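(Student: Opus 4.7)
The plan is to adapt the semigroup-and-Laplace-transform argument used for Theorem \ref{thm:result2}, but replacing the heat semigroup $e^{-tH_{0}}$ by the unitary Schr\"odinger group $U(t)=e^{-itH_{0}}$, whose Schwartz kernel
\[
k(x,y;t)=(4\pi it)^{-n/2}\,e^{-|x-y|^{2}/4it}
\]
has modulus $(4\pi|t|)^{-n/2}$ \emph{independent} of $t$ being real. This will let us exchange $|\realpart\lambda|$ for $|\impart\lambda|$ in the final bound.

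First I would record that, for $\impart\lambda>0$, the resolvent is recovered from $U(t)$ by the Laplace-type formula
\[
R(\lambda;H_{0})=-i\int_{0}^{\infty}e^{i\lambda t}\,U(t)\,dt,
\]
in complete analogy with \eqref{eq:semigroup}; the case $\impart\lambda<0$ follows by the symmetric formula on the reversed group, so I may suppose $\impart\lambda>0$ and it is enough to bound $\|BU(t)A\|_{L^{p}\to L^{p}}$ and integrate.

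Next I would obtain the dispersive $L^{\beta}\to L^{\beta'}$ bound
\[
\|U(t)\|_{\beta\to\beta'}\le (4\pi|t|)^{-n(1/\beta-1/2)},\qquad 1\le\beta\le 2,
\]
by Riesz--Thorin interpolation between the trivial unitary bound $\|U(t)\|_{2\to 2}=1$ and the $L^{1}\to L^{\infty}$ dispersion bound $\|U(t)\|_{1\to\infty}\le(4\pi|t|)^{-n/2}$ read off directly from $k$. The operator-of-multiplication bounds \eqref{eq:anorm} and \eqref{eq:bnorm} remain in force with $\beta^{-1}=r^{-1}+p^{-1}$ and $\gamma^{-1}=p^{-1}-s^{-1}$, and the hypothesis $r^{-1}-s^{-1}=1-2p^{-1}$ is exactly what is required to make $\gamma=\beta'$, so that the three bounds compose. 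The flanking hypothesis $2^{-1}-p^{-1}\le r^{-1}\le 1-p^{-1}$ is precisely the condition $1\le\beta\le 2$ needed to apply the dispersion bound, while $r^{-1}+s^{-1}<2n^{-1}$ guarantees $n(1/\beta-1/2)<1$, so the subsequent $t$-integral converges.

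Composing these three bounds yields
\[
\|BU(t)A\|_{L^{p}\to L^{p}}\le (4\pi|t|)^{-\eta}\,\|a\|_{r}\,\|b\|_{s},\qquad \eta:=n(1/\beta-1/2),
\]
and a direct computation using $r^{-1}-s^{-1}=1-2p^{-1}$ to eliminate $p^{-1}$ shows $\eta=n(r^{-1}+s^{-1})/2=1-\alpha$. Inserting this into the Laplace representation and applying formula \eqref{eq:real} to the scalar integral $\int_{0}^{\infty}t^{-\eta}e^{-(\impart\lambda)t}\,dt=(\impart\lambda)^{\eta-1}\Gamma(1-\eta)$ gives
\[
\|Q(\lambda)\|=\|BR(\lambda;H_{0})A\|\le (4\pi)^{-\eta}\,\Gamma(\alpha)\,(\impart\lambda)^{-\alpha}\,\|a\|_{r}\,\|b\|_{s}.
\]
Finally, as in the proof of Theorem \ref{thm:result1}, an eigenvalue $\lambda$ of $H$ outside $\sigma_{ess}(H_{0})$ forces $\|Q(\lambda)\|\ge 1$, which rearranges to \eqref{eq:res3}.

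The main obstacle I anticipate is not the dispersion bound itself (standard) but the bookkeeping of exponents: verifying that the algebraic identity $n(1/\beta-1/2)=n(r^{-1}+s^{-1})/2$ holds under the stated hypotheses, and confirming that the simultaneous ranges of $r,s,p$ really do place $\beta$ into $[1,2]$ so that Riesz--Thorin applies with the claimed endpoint $(1,\infty)$. The remaining case $\impart\lambda<0$ is handled verbatim using $U(-t)$ in place of $U(t)$, yielding $|\impart\lambda|^{\alpha}$ in both cases.
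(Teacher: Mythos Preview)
Your proposal is correct and follows essentially the same route as the paper: represent $R(\lambda;H_{0})$ as the Laplace transform of the Schr\"odinger group $e^{-itH_{0}}$, use the $L^{\beta}\to L^{\beta'}$ dispersive bound together with the multiplication estimates for $A$ and $B$ (the constraint $r^{-1}-s^{-1}=1-2p^{-1}$ being exactly $\gamma=\beta'$), and then integrate in $t$ via \eqref{eq:real}. The only cosmetic difference is that the paper derives the dispersive bound by writing $e^{-itH_{0}}$ explicitly as a modulated Fourier transform and invoking Hausdorff--Young, whereas you obtain it by Riesz--Thorin interpolation between the $L^{1}\to L^{\infty}$ kernel bound and unitarity on $L^{2}$; these are of course equivalent (and your sign in the Laplace formula should be $+i$ rather than $-i$, though this is immaterial for the norm estimate).
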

\begin{proof}
The proof will depend upon a  modification of the argument used in proving the previous result. 
  Instead of \eqref{eq:semigroup} it will be used the formula expressing the resolvent $R (\lambda ; H_{0})$   as the Laplace transform of the  operator-group $e^{- i t H_{0}} \ (- \infty < t < \infty),$ namely
\begin{equation}\label{eq:opergroup}
R (\lambda; H_{0})  = i \int_{0}^{\infty}      e^{i  \lambda t} e^{- i t H_{0}} \ud t
\end{equation}
if,  for instance, $\impart \lambda > 0.$    First, we estimate the  norm $\| B  e^{- i t H_{0}}  A \|$  and then by using the formula  \eqref{eq:opergroup} we will derive estimation for  $ \impart \lambda $  (we  preserve notations made above).

As is known (cf., for instance, \cite{prosser} and also \cite{kato1}, Ch.IX),   for a fixed real $t, e^{- i t H_{0}} $ represents an integral  operator with the heat kernel (cf. \eqref{eq:heatkernel})

$$h (x, y; i t )  =   (4 \pi i t)^{- n/2}  e^{- | x - y |^{2}/4 i t}.$$

Writing
\begin{equation}\label{eq:hk}
(  e^{- i t H_{0}} A u)  (x)  = (4 \pi i t)^{- n/2}   e^{- | x |^{2}/4 i t}   \int_{\reals^{n}}   e^{- i \langle x,y \rangle/2 t}   e^{- | y |^{2}/4 i t}   a (y)    u  (y) \ud y,
\end{equation}
we argue as follows. 

We already know that 
$$\| A u \|_{\beta}  \leq \| a \|_{r}    \| u \|_{p},  \quad  \beta^{- 1}   =  r^{- 1} + p^{- 1}.$$
 It follows that for any  $u \in L_{p} (\reals^{n})$  the function $v$ defined by $v (y)= e^{- | y |^{2}/4 i t} a(y) u(y)$  belongs to $ L_{\beta } (\reals^{n}),$   and
\begin{equation}\label{eq:vnorm}
\| v \|_{\beta}  \leq \| a \|_{r}    \| u \|_{p}.
\end{equation}	
 
 Further, the integral on the right-hand side in \eqref{eq:hk} represents the function 
 $(2 \pi )^{n/2}   \hat{v}   (x/2 t),$   where $ \hat{v} $ denotes the Fourier transform of $v$.  
According to   the Hausdorf-Young theorem  (see, for instance, \cite{bergh-lofstrom}, Theorem 1.2.1) the  Fourier transform represents a bounded  operator from  $L_{\beta} (\reals^{n})$  to $L_{\beta^{'}} (\reals^{n})$  with  $1 \leq \beta \leq 2,$ and its norm is bounded by  $(2 \pi)^{- n/2 + n/\beta^{'}},$  i.e.,
\begin{equation}\label{eq:bdnorm}
\| \hat{v} \|_{\beta^{'}}  \leq  (2 \pi )^{- n/2 + n/\beta^{'}}   \| v \|_{\beta}.
\end{equation}
It follows that $\hat{v}  \in  L_{\beta^{'}} (\reals^{n})$ and, since
$$( e^{- i t H_{0}} A u) (x) = (4 \pi i t)^{- n/2}   e^{- | x |^{2}/4 i t}   (2 \pi )^{ n/2}  \hat{v} (x/2 t),$$
the function $ e^{- i t H_{0}} A u$ belongs to $ L_{\beta^{'}} (\reals^{n}).$  
Moreover, in view of \eqref{eq:vnorm} and \eqref{eq:bdnorm},
$$\| e^{- i t H_{0}} A u \|_{\beta^{'}}  =    (4 \pi t)^{- n/2}  (2 \pi)^{n/2}  \biggl( \int_{\reals^{n}}  |  \hat{v} (x/2 t)   |^{\beta^{'}}  \ud x \biggl)^{1/\beta^{'}}  $$

$$=   (4 \pi t)^{- n/2}  (2 \pi)^{n/2}  (2 t)^{n/\beta^{'}}  \| \hat{v} \|_{\beta^{'}}  \leq    (4 \pi t)^{- n/2}  (2 \pi)^{n/2}  (2 t)^{n/\beta^{'}}    (2 \pi )^{- n/2 + n/\beta^{'}}  \| {v} \|_{\beta}  $$

$$\leq   (4 \pi t)^{- n/2 + n/\beta^{'}}   \| a \|_{r} \ \| u \|_{p},$$
so that
$$\| e^{- i t H_{0}} A u \|_{\beta^{'}}  \leq (4 \pi t)^{- n/2 + n/\beta^{'}}     \| a \|_{r}   \| u \|_{p},  \ u \in  L_{p} (\reals^{n}).$$
On the other hand, since  $ r^{- 1}  -  s^{- 1}  = 1 - 2 p^{- 1},$ and since $\beta^{- 1} = r^{- 1} + p^{- 1},$
 one has
$ s^{- 1}  +   \beta^{' - 1}  = p^{- 1},$
that guarantees the boundedness of the operator of multiplication  $B$  regarded as an operator acting  from  $L_{ \beta^{'}} (\reals^{n})$  to $L_{p} (\reals^{n}).$   Moreover, 
$$\| B  v \|_{p}  \leq \| b \|_{s}  \| v \|_{\beta^{'}},  \quad v \in  L_{p^{'}} (\reals^{n}),$$

It is seen that for any $ u \in L_{p} (\reals^{n})$  the element $B  e^{- i t H_{0}} A u$ belongs to  $L_{p} (\reals^{n}),$  and

$$\| B  e^{- i t H_{0}}  A u \|_{p}  \leq   (4 \pi t)^{- n/2 + n/\beta^{'}}    \| a \|_{ r}    \| b \|_{ s}     \| u \|_{p}, \quad   u \in L_{p} (\reals^{n}).$$

Now, we apply \eqref{eq:opergroup} and for $\impart \lambda > 0$  we find 
$$\| B R (\lambda; H_{0}) A u \|_{p}   \leq  \int_{0}^{\infty}  e^{-(\impart \lambda) t}     \| B  e^{- i t H_{0}}  A u \|_{p}  \ud t $$

$$\leq (4 \pi )^{- n/2 + n/\beta^{'}}    \| a \|_{r}    \| b \|_{s}     \| u \|_{p}   \int_{0}^{\infty}  t^{- n/2 + n/\beta^{'}}  e^{-(\impart \lambda) t}  \ud t.$$

Next, we observe 
$1 - n/2 + n/\beta^{'}   = \alpha$
that was  assumed to be positive, and thus we can  apply the formula \eqref{eq:real}, due to of which, we have 

$$\int_{0}^{\infty}  t^{- n/2 + n/\beta^{'}}    e^{-(\impart \lambda) t}  \ud t  =    ( \impart \lambda )^{-\alpha} \Gamma  (\alpha).$$

Therefore, 

$$\| B R (\lambda; H_{0}) A  \|   \leq   (4 \pi)^{\alpha - 1}   ( \impart \lambda )^{- \alpha}   \Gamma (\alpha)   \| a \|_{r}  \| b \|_{s}.$$

For an eigenvalue $\lambda$ of $H$ it should be 
$$1 \leq (4 \pi)^{\alpha - 1}  ( \impart \lambda )^{- \alpha }  \Gamma (\alpha)  \| a \|_{r} \ \| b \|_{s},$$

that is \eqref{eq:res3}.

The estimate for the case $\impart \lambda < 0$  is treated similarly coming from the formula 

$$ R (\lambda; H_{0})  = - i  \int^{0}_{\infty}  e^{i \lambda t}    e^{- i H_{0} t}   \ud t, \quad \impart \lambda < 0.$$
\end{proof}

Notice that if $r = s$ in Theorem \ref{thm:result3}, it must be only $p = 2$ and $r > n.$  For this case  we have the following result.
\begin{cor}\label{cor:im}
Let $n \geq 3$, $r > n$,  and suppose  $ q \in L_{r/2} (\reals^{n}).$    Then any complex eigenvalue $\lambda$  with  $Im \lambda \neq 0$ of the Schr\"{o}dinger operator $H$  defined in the space $L_{2} (\reals^{n})$  satisfies 
\begin{equation}\label{eq:corim}
| \impart \lambda |^{1 - n/r}  \leq  (4 \pi)^{- n/r} \Gamma (1 - n/r)  \| q \|_{r/2}.
\end{equation}
\end{cor}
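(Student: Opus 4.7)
The plan is to obtain \eqref{eq:corim} as an essentially mechanical specialization of Theorem \ref{thm:result3}, so the main work is parameter-matching rather than new analysis.

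First I would verify that the symmetric choice $r = s$ together with $p = 2$ is compatible with the hypotheses of Theorem \ref{thm:result3}. The constraint $r^{-1} - s^{-1} = 1 - 2p^{-1}$ forces $p = 2$ as soon as $r = s$, which is precisely the Hilbert-space setting required by the corollary. The interval constraint $2^{-1} - p^{-1} \leq r^{-1} \leq 1 - p^{-1}$ then collapses to $0 \leq r^{-1} \leq 1/2$, i.e.\ $r \geq 2$; and the smallness condition $r^{-1} + s^{-1} < 2 n^{-1}$ becomes $r > n$, which is the hypothesis of the corollary (and automatically gives $r \geq 2$ since $n \geq 3$). With these choices the exponent in \eqref{eq:res3} is
$$\alpha = 1 - \frac{n}{2}(r^{-1} + s^{-1}) = 1 - \frac{n}{r},$$
which matches the left-hand side of \eqref{eq:corim}.

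Next I would factor the potential in the same way as in the proof of Corollary \ref{cor:q1}, namely
$$a(x) = |q(x)|^{1/2}, \qquad b(x) = (\sign q(x))\, |q(x)|^{1/2},$$
so that $a b = q$ pointwise. A direct computation gives
$$\|a\|_{r}^{r} = \|b\|_{r}^{r} = \int_{\reals^{n}} |q(x)|^{r/2}\ud x = \|q\|_{r/2}^{r/2},$$
whence $\|a\|_{r}\,\|b\|_{r} = \|q\|_{r/2}$. Substituting into \eqref{eq:res3} with $\alpha = 1 - n/r$ then yields exactly \eqref{eq:corim}.

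Since the corollary is a pure specialization of the theorem, I do not expect any serious obstacle. The only step demanding care is confirming that the somewhat intricate package of constraints of Theorem \ref{thm:result3} really does collapse, in the symmetric case, to the single condition $r > n$ together with $p = 2$; once this is in place, the factorization $q = a b$ above and a one-line substitution complete the proof.
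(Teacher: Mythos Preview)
Your proposal is correct and follows exactly the approach the paper takes: the paper simply notes, immediately before the corollary, that setting $r = s$ in Theorem~\ref{thm:result3} forces $p = 2$ and $r > n$, and then states the result without further proof. Your parameter-matching and the standard factorization $a = |q|^{1/2}$, $b = (\sign q)\,|q|^{1/2}$ make explicit precisely what the paper leaves implicit.
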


For the particular case when $r = 2 \gamma + n$ we have the following result (an analogous  result to that given by Corollary \ref{cor:res2}). 
\begin{cor}\label{cor:im1}
Let $n \geq 3, \gamma > 0$ and suppose  that
$ q \in L_{\gamma + n/2} (\reals^{n}).$
Then for any complex eigenvalue $\lambda$  with  $\impart \lambda \neq 0$ of the Schr\"{o}dinger operator   defined in  $L_{2} (\reals^{n})$  there holds 
\begin{equation}\label{eq:corim1}
| \impart \lambda |^{\gamma}  \leq  (4 \pi)^{- n/2} \Gamma \biggl( \frac{2 \gamma}{2 \gamma + n}  \biggl)^{\gamma + n/2}   \int_{\reals^{n}}  | q (x) |^{\gamma + n/2} \ud x.
\end{equation}
\end{cor}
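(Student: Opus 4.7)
The plan is to reduce Corollary \ref{cor:im1} to the case $r=s$ of Theorem \ref{thm:result3} (equivalently, to Corollary \ref{cor:im}) by a judicious choice of $r$ and a standard symmetric factorization of the potential. Set
$$r = 2\gamma + n,$$
so that $r > n$ (since $\gamma > 0$) and $r/2 = \gamma + n/2$. Working in $L_{2}(\reals^{n})$ (i.e.\ $p=2$), the hypotheses of Corollary \ref{cor:im} are satisfied by the assumption $q \in L_{\gamma+n/2}(\reals^{n})$.

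Next, factor the potential symmetrically as $q = a b$ with
$$a(x) = |q(x)|^{1/2}, \qquad b(x) = (\sign q(x))\, |q(x)|^{1/2},$$
using the convention $\sign q(x) = q(x)/|q(x)|$ on $\{q \neq 0\}$ and $0$ elsewhere, exactly as in the proof of Corollary \ref{cor:q1}. Then $|a|^{r} = |b|^{r} = |q|^{r/2} = |q|^{\gamma+n/2}$, so $a, b \in L_{r}(\reals^{n})$ and
$$\| a \|_{r}\, \| b \|_{s} = \| q \|_{r/2} = \biggl( \int_{\reals^{n}} |q(x)|^{\gamma+n/2} \ud x \biggl)^{1/(\gamma+n/2)}.$$

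Applying Corollary \ref{cor:im} with this value of $r$ gives
$$| \impart \lambda |^{1 - n/r} \leq (4\pi)^{- n/r}\, \Gamma(1 - n/r)\, \| q \|_{r/2},$$
and since $r = 2\gamma+n$ implies $1 - n/r = 2\gamma/(2\gamma+n)$, raising both sides to the power $(2\gamma+n)/2 = \gamma + n/2$ converts the left-hand exponent to $\gamma$, the factor $(4\pi)^{-n/r}$ to $(4\pi)^{-n/2}$, the gamma factor to $\Gamma(2\gamma/(2\gamma+n))^{\gamma+n/2}$, and $\| q \|_{r/2}^{\gamma+n/2}$ to the integral $\int |q|^{\gamma+n/2}\,\ud x$. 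This yields \eqref{eq:corim1}.

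There is no substantive obstacle: the whole argument is the specialization $r=s=2\gamma+n$ of the previously established bound followed by an exponent rescaling, and the only point requiring minimal care is verifying that the hypotheses of Corollary \ref{cor:im} (in particular $r>n$, which holds iff $\gamma>0$) are matched by the hypothesis $\gamma>0$ of the present statement.
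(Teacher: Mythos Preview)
Your proof is correct and follows exactly the route indicated in the paper: specialize Corollary~\ref{cor:im} to $r = 2\gamma + n$ (equivalently, take $r=s=2\gamma+n$ in Theorem~\ref{thm:result3} with the standard factorization $a=|q|^{1/2}$, $b=(\sign q)|q|^{1/2}$) and raise both sides to the power $\gamma+n/2$. The only redundancy is that the factorization is already absorbed in the statement of Corollary~\ref{cor:im}, so you need not display it again.
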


\begin{rem}\label{rem:vdash}
 The estimate given by Theorem \ref{thm:result3} can be improved  upon a constant less than $1$.  The point  is that in proving Theorem \ref{thm:result3} it can be applied the sharp form of  the Hausdorff-Young theorem which is due  to K. I. Babenko \cite{babenko} (see also W. Beckner \cite{beckner} for the general case relevant for our purposes). According to Babenko's  result estimation \eqref{eq:bdnorm}, and hence \eqref{eq:res3} as well,  can be refined  upon a constant less than $1$, namely
$$\| \hat{v} \|_{\beta^{'}}  \leq  (2 \pi )^{- n/2 + n/\beta^{'}}  A \| v \|_{\beta},$$
where $A = (\beta^{1/\beta}/\beta^{' 1/\beta^{'}})^{n/2}.$   It is always $A \leq 1$ provided of $ 1 \leq \beta \leq 2$, and it is strictly less than $1$ if $\beta$ is chosen such that $1 < \beta < 2$. The same concerns  estimates \eqref{eq:corim}  and \eqref{eq:corim1}.
\end{rem} 

\textbf{ 3.}  The norm evaluation   for the operators $B R (\lambda; H_{0}) A$  for $\lambda \in \complex \setminus [0, \infty)$  can be carried out representing the resolvent of $H_{0}$  in terms of the Fourier transform.  Namely, it can  use the following equality 
\begin{equation}\label{eq:br}
B R (\lambda; H_{0}) A = B F^{-  1} \widehat{R (\lambda; H_{0})} F A,
\end{equation}
where it is denoted
$$ \widehat{R (\lambda; H_{0})} =  F R (\lambda; H_{0}) F^{-  1}$$
($F,  F^{-  1}$ denote the Fourier operators).  Clearly, $ \widehat{R (\lambda; H_{0})}$ represents the multiplication operator by $(| \xi |^{2} - \lambda)^{- 1},$ i.e., 
$$ \widehat{R (\lambda; H_{0})}  \hat{u} (\xi)  =   (| \xi |^{2} - \lambda)^{- 1}   \hat{u} (\xi), \quad \xi \in  \reals^{n}.$$
On $L_{2} (\reals^{n})$   the mentioned relations are  obviously  true.  However, we will use them for the spaces   $L_{p} (\reals^{n})$  with $p \neq 2$, as well, preserving the same notations as in the Hilbert space case $p = 2.$ 

As before, by assuming  that $a \in  L_{r} (\reals^{n})$  and  $b \in L_{s} (\reals^{n})$   $(0 < r, s \leq \infty),$  we choose $\beta > 0$  and $\gamma > 0$ such that 
\begin{equation}\label{eq:normAu}
\|  A u \|_{\beta}  \leq \| a \|_{r}  \ \| u \|_{p}, \quad  \beta^{- 1} =  r^{- 1} + p^{- 1},
\end{equation}
\begin{equation}\label{eq:normBv}
\|  B v \|_{p}  \leq \| b \|_{ s}  \ \| v \|_{\gamma}, \quad  \quad  p^{- 1} =  s^{- 1} + \gamma^{- 1}.
\end{equation}

According to the Hausdorff-Young theorem,  if $1 \leq \beta \leq 2$, the Fourier transform $F$ represents a bounded operator from  $L_{\beta} (\reals^{n})$  to $L_{\beta^{'}} (\reals^{n})$   the norm of which is bounded by $(2 \pi)^{-n/2 + n/\beta^{'}}$,  i.e.,
\begin{equation}\label{eq:normf}
\|  F f \|_{\beta^{'}} \leq  (2 \pi)^{-n/2 + n/\beta^{'}}    \| f \|_{\beta}.
\end{equation}
The same concerns the inverse Fourier transform $F^{- 1}$ considered as an operator acting from   $L_{\gamma^{'}} (\reals^{n})$  to  $L_{\gamma} (\reals^{n}).$  If $1 \leq  \gamma^{'}  \leq 2,$  that is  equivalent to $2 \leq \gamma \leq \infty$, we have 
\begin{equation}\label{eq:normfinv}
\|  F^{- 1} g \|_{\gamma} \leq  (2 \pi)^{-n/2 + n/\gamma}    \| g \|_{\gamma^{'}}.
\end{equation}

Now, we take $\alpha, 0 < \alpha \leq \infty,$  such that 
\begin{equation}\label{eq:alfb}
\gamma^{' - 1}  = \alpha^{- 1}  + \beta^{' - 1},
\end{equation}
equivalently,     $\alpha^{- 1} =  r^{- 1}  +    s^{- 1},$
and evaluate the $L_{\alpha}$-norm of the function $h (\cdot; \lambda)$
defined by
$$h (\xi; \lambda)  = ( | \xi |^{2}   - \lambda)^{- 1}, \quad \xi \in  \reals^{n}.$$

For $\alpha \neq \infty$  we  have 

$$\| h (\cdot; \lambda)  \|_{\alpha}^{\alpha}  =  \int_{ \reals^{n}}  | | \xi |^{2} - \lambda |^{- \alpha} \ud \xi =  \int_{0}^{\infty}  \int_{S^{n - 1}}  \rho^{n - 1}  | \rho^{2} - \lambda |^{- \alpha} \ud \rho \ud \omega ?$$
$$=  mes (S^{n - 1})     \int_{0}^{\infty}   \rho^{n - 1}  | \rho^{2} - \lambda |^{- \alpha} \ud \rho,$$

where $mes (S^{n - 1})  =  2 \pi^{n/2} / \Gamma (n/2)$  is the surface measure of the unit sphere $ S^{n - 1}$  in $\reals^{n}.$  
  Therefore, 
\begin{equation}\label{eq:normh}
\| h (\cdot; \lambda)  \|_{\alpha}^{\alpha}  = 2 \pi^{n/2} / \Gamma (n/2)  \int_{0}^{\infty}   \rho^{n - 1}  | \rho^{2} - \lambda |^{- \alpha} \ud \rho.
\end{equation}

If, we are particularly interesting in estimation  of negative eigenvalues,  we let that $\realpart \lambda < 0$ and evaluate the integral in \eqref{eq:normh} as follows.  First we observe that 

$$ | \rho^{2} - \lambda |^{- 1} \leq  (\rho^{2} - \realpart \lambda)^{- 1},$$
and then by setting $\rho^{2} = t$  we obtain 
$$\| h (\cdot; \lambda)  \|_{\alpha}^{\alpha}  \leq  \frac{\pi^{n/2} | \realpart \lambda |^{- \alpha}}{\Gamma (n/2)}  \int_{0}^{\infty}  \frac{ t^{n/2 - 1}}{( | \realpart \lambda |^{- 1} t + 1)^{\alpha}} \ud t.$$
By supposing $\alpha > n/2$  the formula (\cite{gradshteyn-ryzhik}, 3.194.3.)
$$ \int_{0}^{\infty}  \frac{x^{\mu - 1}}{(1 + \beta x)^{\nu}} \ud x = \beta^{- \mu} B (\mu, \nu - \mu), \quad \ | arg \beta | < \pi, \quad \realpart \nu > \realpart \mu > 0$$
($B (x, y)$  denotes the  beta function), 
can be applied.  We get

$$\| h (\cdot; \lambda)  \|_{\alpha}^{\alpha}  \leq  \pi^{n/2} (\Gamma (n/2) )^{- 1}  | \realpart \lambda |^{n/2 - \alpha}  B ( n/2, \alpha - n/2),$$
or,  in view of the functional relation  between  beta  and gamma functions,
\begin{equation}\label{eq:normh1}
\| h (\cdot; \lambda)  \|_{\alpha}^{\alpha}  \leq	  \pi^{n/2}    | \realpart \lambda |^{n/2 - \alpha} \  \Gamma (\alpha - n/2) /  \Gamma (\alpha). 
\end{equation}

Thus, for $\alpha > n/2$ the function $ h (\cdot; \lambda)$ belongs  to the space $L_{\alpha}  (\reals^{n})$ and, since \eqref{eq:alfb}, it follows that the operator of multiplication $\widehat{R  (\lambda; H_{0})}$  is bounded as an operator acting from   $L_{\beta^{'}}  (\reals^{n})$  to  $L_{\gamma^{'}}  (\reals^{n})$,   and, due to of \eqref{eq:normh1},  there holds 

\begin{equation}\label{eq:normR}
\| \widehat{R ( \lambda; H_{0})} f  \|_{\gamma^{'}}  \leq 	  \pi^{n/2 \alpha}    | \realpart \lambda |^{n/2 \alpha - 1} \  (\Gamma (\alpha - n/2) /  \Gamma (\alpha))^{1/\alpha} \| f \|_{\beta^{'}}. 
\end{equation}
  
In this way we obtain (cf. \eqref{eq:normAu} - \eqref{eq:normfinv}, \eqref{eq:normR})

$$\| B R (\lambda; H_{0}) A   \|   \leq (2 \pi)^{- n/\alpha}   \pi^{n/2 \alpha}    | \realpart \lambda |^{n/2 \alpha - 1} \  (\Gamma (\alpha - n/2) / \Gamma (\alpha))^{1/\alpha} \| a \|_{r}    \| b \|_{s}.$$

Therefore, for an eigenvalue $\lambda$ of $H$, it should by fulfilled
$$1 \leq   (2 \pi)^{- n/\alpha}   \pi^{n/2 \alpha}    | \realpart \lambda |^{n/2 \alpha - 1} /  (\Gamma (\alpha - n/2) / \Gamma (\alpha))^{1/\alpha} \| a \|_{r}    \| b \|_{s},$$

or, equivalently,
\begin{equation}\label{eq:relam}
| \realpart \lambda |^{1 - n/2 \alpha}  \leq  (4  \pi)^{- n/2 \alpha}  (\Gamma (\alpha - n/2)  /  \Gamma (\alpha))^{1/\alpha} \| a \|_{r}    \| b \|_{s}.
\end{equation}

In the extremal case $\alpha = \infty$, that is only happen  if $r = s = \infty$  (recall that  $\alpha^{- 1} =  r^{- 1} +  s^{- 1}$),  there  holds

$$\| h (\cdot; \lambda)  \|_{\infty}  = \sup_{\xi \in \reals^{n}}  | | \xi |^{2}  - \lambda |^{- 1} \leq  \sup_{\rho > 0} (\rho^{2}  -  \realpart \lambda)^{- 1} = |  \realpart \lambda |^{- 1},  $$
i.e.,
$$\| h (\cdot; \lambda)  \|_{\infty}  \leq  | \realpart \lambda |^{- 1}.$$
  In accordance with this evaluation, one follows 
\begin{equation}\label{eq:relam1}
 | \realpart \lambda |	 \leq  \| a \|_{\infty}   \| b \|_{\infty},
\end{equation}
a natural  estimate for eigenvalues occurred outside  of the continuous spectrum of $H_{0}$ by bounded  perturbations.

Note that the restriction $1 \leq \beta \leq 2$  is  equivalent to $2^{- 1} -  p^{- 1} \leq r^{- 1} \leq  1 - p^{- 1},$   while     $1 \leq \gamma^{'} \leq 2$  to   $2^{- 1} +  p^{- 1} \leq s^{- 1} \leq  p^{- 1},$  and  $\alpha > n/2$  to $r^{- 1} + s^{- 1} < 2 n^{- 1}.$

We have proved the following result.

\begin{thm}\label{thm:result4}
Let $1 < p < \infty $, and let $q =  a b,$  where 
$ a \in L_{r}  (\reals^{n}),$  \   $b \in  L_{s}  (\reals^{n})$  for $r, s$  satisfying 
    $0 < r \leq \infty,$ \ $0  < s \leq \infty,$  \quad  $2^{- 1} -  p^{- 1} \leq r^{- 1} \leq 1 - p^{- 1},$    $- 2^{- 1} +  p^{- 1} \leq s^{- 1} \leq  p^{- 1}$,  and  $r^{- 1} +  s^{- 1} < 2 n^{- 1}.$    Then, for any  eigenvalue $\lambda$  with $\realpart \lambda < 0$  of the Schr\"{o}dinger  operator $H,$ considered acting in  the space $ L_{p}  (\reals^{n}),$  there holds 

\begin{equation}\label{eq:result4}
 | \realpart \lambda |^{\alpha - n/2}   \leq C    (n, \alpha)  \| a \|_{r}^{\alpha} \   \| b \|_{s}^{\alpha},
\end{equation}
where  $C (n, \alpha)  =  ( 4  \pi)^{- n/2}  \Gamma (\alpha - n/2)  / \Gamma (\alpha)$,   $\alpha = ( r^{- 1} +  s^{-1})^{- 1}.$

For $r = s = \infty$   there holds \eqref{eq:relam1}.
\end{thm}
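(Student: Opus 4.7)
The strategy is to estimate the operator norm of the bordered resolvent $Q(\lambda) = BR(\lambda;H_0)A$ and then invoke the standard fact from Section~\ref{sec:preliminaries} that $\|Q(\lambda)\| \geq 1$ whenever $\lambda$ is an eigenvalue of $H$ outside the essential spectrum. The novelty here, compared with the heat-kernel approach of Theorem~\ref{thm:result3}, is to diagonalize the free resolvent via Fourier transform: writing $R(\lambda;H_0) = F^{-1} M_{h(\cdot;\lambda)} F$ with $h(\xi;\lambda) = (|\xi|^2 - \lambda)^{-1}$, one factors $Q(\lambda)$ as the composition $L_p \xrightarrow{A} L_\beta \xrightarrow{F} L_{\beta'} \xrightarrow{M_h} L_{\gamma'} \xrightarrow{F^{-1}} L_\gamma \xrightarrow{B} L_p$ and bounds each link separately.

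Concretely, I would choose auxiliary exponents $\beta,\gamma$ by $\beta^{-1} = r^{-1}+p^{-1}$ and $\gamma^{-1} = p^{-1}-s^{-1}$ so that H\"older gives $\|Au\|_\beta \leq \|a\|_r\|u\|_p$ and $\|Bv\|_p \leq \|b\|_s\|v\|_\gamma$. The Hausdorff--Young theorem supplies $\|Ff\|_{\beta'} \leq (2\pi)^{-n/2+n/\beta'}\|f\|_\beta$ and $\|F^{-1}g\|_\gamma \leq (2\pi)^{-n/2+n/\gamma}\|g\|_{\gamma'}$, valid precisely when $1 \leq \beta \leq 2$ and $1 \leq \gamma' \leq 2$, which one checks to be equivalent to the hypotheses $2^{-1}-p^{-1} \leq r^{-1} \leq 1-p^{-1}$ and $-2^{-1}+p^{-1} \leq s^{-1} \leq p^{-1}$. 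For the multiplier in the middle, H\"older requires $h(\cdot;\lambda) \in L_\alpha$ with $\alpha^{-1} = \gamma'^{-1}-\beta'^{-1}$, which collapses to $\alpha^{-1} = r^{-1}+s^{-1}$.

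The analytic heart of the argument is then the $L_\alpha$-norm estimate for $h(\cdot;\lambda)$ when $\realpart\lambda < 0$. Passing to polar coordinates yields $\|h(\cdot;\lambda)\|_\alpha^\alpha = \mathrm{mes}(S^{n-1})\int_0^\infty \rho^{n-1}|\rho^2-\lambda|^{-\alpha}\ud\rho$. Using $|\rho^2-\lambda|^{-1} \leq (\rho^2+|\realpart\lambda|)^{-1}$ and substituting $t = \rho^2/|\realpart\lambda|$ reduces everything to a beta-function integral $\int_0^\infty t^{n/2-1}(1+t)^{-\alpha}\ud t = B(n/2,\alpha-n/2)$, which converges precisely under $\alpha > n/2$, i.e.\ $r^{-1}+s^{-1} < 2n^{-1}$. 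Expressing $B$ in terms of $\Gamma$ and absorbing the factors of $\pi$ gives the clean bound $\|h(\cdot;\lambda)\|_\alpha \leq \pi^{n/2\alpha}|\realpart\lambda|^{n/2\alpha-1}(\Gamma(\alpha-n/2)/\Gamma(\alpha))^{1/\alpha}$.

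Multiplying the five norm bounds and checking that the Hausdorff--Young prefactors combine as $(2\pi)^{-n/\alpha}\cdot\pi^{n/2\alpha} = (4\pi)^{-n/2\alpha}\cdot 2^{n/2\alpha-n/\alpha}$, one arrives at $\|Q(\lambda)\| \leq C(n,\alpha)^{1/\alpha}|\realpart\lambda|^{n/2\alpha-1}\|a\|_r\|b\|_s$ with $C(n,\alpha) = (4\pi)^{-n/2}\Gamma(\alpha-n/2)/\Gamma(\alpha)$; imposing $\|Q(\lambda)\| \geq 1$ and raising to the $\alpha$-th power yields \eqref{eq:result4}. The degenerate endpoint $r = s = \infty$ forces $\alpha = \infty$ and the whole Fourier detour collapses: $\|h(\cdot;\lambda)\|_\infty \leq |\realpart\lambda|^{-1}$ directly gives \eqref{eq:relam1}. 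The only genuine obstacle is the bookkeeping of exponents --- verifying that the three index restrictions in the hypothesis correspond exactly to the two Hausdorff--Young windows and the integrability threshold $\alpha > n/2$ --- since everything else is a routine chain of H\"older-type estimates and a single explicit beta integral.
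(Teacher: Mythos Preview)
Your proposal is correct and follows essentially the same route as the paper: factor $Q(\lambda)=BF^{-1}M_{h}FA$, bound the outer multiplications by H\"older, the two Fourier links by Hausdorff--Young, and the middle multiplier by the explicit $L_\alpha$-norm of $h(\cdot;\lambda)$ computed via a beta integral, with the endpoint $r=s=\infty$ handled by the trivial sup-norm bound. The only slip is the displayed identity $(2\pi)^{-n/\alpha}\pi^{n/2\alpha}=(4\pi)^{-n/2\alpha}\cdot 2^{n/2\alpha-n/\alpha}$: the extra power of $2$ should not be there (the left side already equals $(4\pi)^{-n/2\alpha}$), but since you state the final constant $C(n,\alpha)=(4\pi)^{-n/2}\Gamma(\alpha-n/2)/\Gamma(\alpha)$ correctly this is a harmless bookkeeping typo.
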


For the particular case $n = 1,  p = 2$  and $r = s = 2$  one  has $\alpha = 1$     and    $C =  1/2,$
hence, in view of \eqref{eq:result4}, the following  estimate
\begin{equation}\label{eq:realpart}
 | \realpart \lambda |^{1/2}   \leq  \frac{1}{2}   \| V \|_{1} \   \biggl( =  \frac{1}{2} \int_{- \infty}^{\infty} | V (x) | \ud x \biggl)
\end{equation}
holds true for any eigenvalue $\lambda$  of $H$ with $\realpart \lambda < 0.$

The obtained evaluation \eqref{eq:realpart} corresponds  to the well-known result of L. Spruch (mentioned in \cite{keller})   concerning negative eigenvalues of the one-dimensional self-adjoint  Schr\"{o}dinger operator considered in $L_{2}  (\reals)$.
  For other related results see \cite{abramov-aslanyan-davies}, \cite{davies-nath}, \cite{frank-laptev-seiringer}, \cite{frank-laptev-lieb-seiringer}, \cite{laptev-safronov} and \cite{safronov1}.

Theorem \ref{thm:result4} implies more general result (cf. also Corollary \ref{cor:res22}).	
\begin{cor}\label{cor:gen}
Let $\gamma > 0$ for $n \geq 2$ and $\gamma \geq 1/2$ for $n = 1.$  If   $q \in  L_{\gamma + n/2}  (\reals^{n}),$ then every  eigenvalue $\lambda$  with $\realpart \lambda < 0$  of the Schr\"{o}dinger  operator $H$  defined in   $ L_{2}  (\reals^{n})$   satisfies
\begin{equation}\label{eq:corgen}
 | \realpart \lambda |^{\gamma}   \leq     (4 \pi)^{- n/2}  \frac{\Gamma (\gamma)}{\Gamma (\gamma + n/2)}   \int_{\reals^{n}} | q (x) |^{\gamma + n/2}  \ud x.
\end{equation}
\end{cor}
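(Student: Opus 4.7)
The plan is to derive Corollary \ref{cor:gen} as an immediate specialization of Theorem \ref{thm:result4} with $p = 2$ and a symmetric factorization of the potential. First I would set $r = s = 2\gamma + n$ and verify the hypotheses of Theorem \ref{thm:result4}. The constraint $2^{-1} - p^{-1} \leq r^{-1} \leq 1 - p^{-1}$ becomes $0 \leq 1/(2\gamma + n) \leq 1/2$, i.e.\ $2\gamma + n \geq 2$; the analogous constraint on $s^{-1}$ is identical. For $n \geq 2$ this holds for every $\gamma > 0$, whereas for $n = 1$ it forces $\gamma \geq 1/2$, which accounts exactly for the case distinction in the statement. The remaining condition $r^{-1} + s^{-1} < 2 n^{-1}$ reads $2/(2\gamma + n) < 2/n$ and is equivalent to $\gamma > 0$. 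With these choices, the exponent $\alpha := (r^{-1} + s^{-1})^{-1}$ equals $\gamma + n/2$, so $\alpha - n/2 = \gamma$, matching the left-hand side of \eqref{eq:corgen}.

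Next I would factor the potential as $q = a b$ with $a(x) := |q(x)|^{1/2}$ and $b(x) := (\sign q(x))\,|q(x)|^{1/2}$, following the device already used in the proof of Corollary \ref{cor:q1}. Since $|a|^{2\gamma + n} = |b|^{2\gamma + n} = |q|^{\gamma + n/2}$, both $a \in L_{r}(\reals^{n})$ and $b \in L_{s}(\reals^{n})$ with
$$\| a \|_{r} = \| b \|_{s} = \biggl(\int_{\reals^{n}} | q(x) |^{\gamma + n/2} \ud x\biggr)^{1/(2\gamma + n)}.$$
Raising each to the power $\alpha = \gamma + n/2$ and multiplying then gives
$$\| a \|_{r}^{\alpha} \,\| b \|_{s}^{\alpha} = \int_{\reals^{n}} | q(x) |^{\gamma + n/2} \ud x.$$

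Finally I would insert these into the conclusion \eqref{eq:result4} of Theorem \ref{thm:result4} and compute the constant,
$$C(n, \alpha) = (4 \pi)^{- n/2}\,\frac{\Gamma(\alpha - n/2)}{\Gamma(\alpha)} = (4 \pi)^{- n/2}\,\frac{\Gamma(\gamma)}{\Gamma(\gamma + n/2)},$$
from which \eqref{eq:corgen} drops out at once. There is no real obstacle here; the only subtle point worth highlighting when checking the constraints is that the threshold $\gamma \geq 1/2$ in the case $n = 1$ is not cosmetic but is forced by the Hausdorff--Young requirement $1 \leq \beta \leq 2$ built into the proof of Theorem \ref{thm:result4}.
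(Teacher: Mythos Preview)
Your proposal is correct and follows exactly the route the paper intends: the corollary is stated immediately after Theorem \ref{thm:result4} as a direct consequence, and your specialization $p=2$, $r=s=2\gamma+n$ together with the factorization $a=|q|^{1/2}$, $b=(\sign q)\,|q|^{1/2}$ (the same device used in the proof of Corollary \ref{cor:q1}) reproduces both the constant and the constraints precisely. Your check that the Hausdorff--Young condition $r\ge 2$ forces $\gamma\ge 1/2$ when $n=1$ is the only point requiring care, and you handle it correctly.
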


A rigorous evaluation of the integral  on the right-hand side of \eqref{eq:normh} leads to more  exact estimates for the perturbed eigenvalues.  To this end, we let $\lambda = | \lambda | e^{i \theta} (0 < \theta < 2 \pi)$    and put $\rho^{2} = | \lambda  | t.$ Then 

$$ \int_{0}^{\infty}   \frac{\rho^{n - 1}}{|  \rho^{2} - \lambda |^{\alpha}} \ud \rho   =
  \frac{1}{2}  | \lambda |^{n/2 - \alpha}  \int_{0}^{\infty}   \frac{t^{n/2  - 1}}{(t^{2} - 2 t \cos \theta + 1)^{\alpha/2}}  \ud t.$$

If $n/2 < \alpha,$ it can be applied the formula (\cite{gradshteyn-ryzhik}; 3.252.10.)

$$\int_{0}^{\infty}   \frac{x^{\mu  - 1}}{(x^{2} + 2 x \cos t + 1)^{\nu}}  \ud x =  (2 \sin t)^{\nu - 1/2}      \Gamma (\nu + 1/2 )  B (\mu, 2 \nu - \mu)  P_{\mu - \nu - 1/2}^{1/2 - \nu}  (\cos t)$$
$$(- \pi < t < \pi, \quad 0 < \realpart \mu < \realpart \ 2 \nu),$$
where $ P_{\mu}^{\nu} (z)    (- 1 \leq z \leq 1)$  denote for the spherical  harmonics of the first kind  (\cite{gradshteyn-ryzhik}; 8.7 - 8.8).  As a result we have
\begin{equation}\label{eq:normhal}
\| h (\cdot; \lambda)  \|_{\alpha} =  \pi^{n/2 \alpha}  | \lambda |^{n/2 \alpha - 1}  I (n, \alpha, \theta),
\end{equation}
where
$$I (n, \alpha, \theta) =  (2 \sin \theta)^{1/2 - 1/2 \alpha}   \biggl(  \frac{\Gamma  (\alpha/2 + 1/2)  \Gamma  (\alpha - n/2)}{\Gamma  (\alpha)}   P_{n/2 - \alpha/2 - 1/2}^{1/2 - \alpha/2}   (- \cos \theta) \biggl)^{1/2},$$

and hence

$$\| B R (\lambda; H_{0}) A u \|_{p}  \leq   (4 \pi^{- n/2 \alpha})  | \lambda |^{n/2 \alpha - 1}  I (n, \alpha, \theta)  \| a \|_{r}    \| b \|_{s}  \| u \|_{p}$$
(note that   $(- n/2 + n/\beta^{'}) + (- n/2 + n/\gamma)  + n/2 \alpha =  -n/2 \alpha$).

Therefore, we obtain the following result.
 \begin{thm}\label{thm:result5}
Under the same assumptions as in Theorem \ref{thm:result4}  for any eigenvalue $\lambda \in \complex \setminus [0, \infty)$  of the Schr\"{o}dinger operator $H$, considering acting in the space  $ L_{p}  (\reals^{n}),$  there holds the estimation
	\begin{equation}\label{eq:result5}
 | \lambda |^{\alpha - n/2}   \leq C (n, \alpha, \theta)   \| a \|_{r}^{\alpha}    \| b \|_{s}^{\alpha}, 
\end{equation}
where 
$C (n,  \alpha, \theta) = (4 \pi)^{- n/2} I  (n,  \alpha, \theta)^{\alpha} $  and $I  (n,  \alpha, \theta)$   as in \eqref{eq:normhal}.
\end{thm}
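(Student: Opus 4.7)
The plan is to follow exactly the chain of operator estimates already assembled for Theorem \ref{thm:result4}, changing only the estimation of the $L_{\alpha}$-norm of the Fourier multiplier $h(\xi;\lambda) = (|\xi|^2 - \lambda)^{-1}$. In Theorem \ref{thm:result4} this norm was bounded via the crude inequality $|\rho^2-\lambda|^{-1}\le(\rho^2-\realpart\lambda)^{-1}$; here I want to retain the full angular dependence $\theta = \arg\lambda$ and compute $\|h(\cdot;\lambda)\|_{\alpha}$ exactly.

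The operator skeleton is unchanged: starting from the factorization $BR(\lambda;H_{0})A = BF^{-1}\widehat{R(\lambda;H_{0})}FA$, I would apply in succession the H\"older-type bound \eqref{eq:normAu} for $A\colon L_{p}\to L_{\beta}$, the Hausdorff--Young estimate \eqref{eq:normf} for $F\colon L_{\beta}\to L_{\beta^{'}}$, the multiplication bound for $\widehat{R(\lambda;H_{0})}\colon L_{\beta^{'}}\to L_{\gamma^{'}}$ (valid under $\alpha^{-1}=\beta'^{-1}-\gamma'^{-1}$, equivalently $\alpha^{-1}=r^{-1}+s^{-1}$, with $\alpha>n/2$), the inverse Hausdorff--Young estimate \eqref{eq:normfinv} for $F^{-1}\colon L_{\gamma^{'}}\to L_{\gamma}$, and finally \eqref{eq:normBv} for $B\colon L_{\gamma}\to L_{p}$. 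The constants coming from the two Fourier estimates combine with the factor from $\|h(\cdot;\lambda)\|_{\alpha}$ according to the identity $(-n/2+n/\beta^{'}) + (-n/2+n/\gamma) + n/(2\alpha) = -n/(2\alpha)$ noted in the text, yielding
$$\|BR(\lambda;H_{0})A\| \;\le\; (4\pi)^{-n/(2\alpha)} \,\|h(\cdot;\lambda)\|_{\alpha}\, \|a\|_{r}\,\|b\|_{s}.$$

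The core computational step is the exact evaluation of $\|h(\cdot;\lambda)\|_{\alpha}$. Passing to polar coordinates gives
$$\|h(\cdot;\lambda)\|_{\alpha}^{\alpha} = \frac{2\pi^{n/2}}{\Gamma(n/2)}\int_{0}^{\infty}\frac{\rho^{n-1}}{|\rho^{2}-\lambda|^{\alpha}}\,d\rho,$$
and with $\lambda = |\lambda|e^{i\theta}$, $\rho^{2}=|\lambda|t$, the integral becomes
$$\frac{|\lambda|^{n/2-\alpha}}{2}\int_{0}^{\infty}\frac{t^{n/2-1}}{(t^{2}-2t\cos\theta+1)^{\alpha/2}}\,dt.$$
Since $\alpha>n/2$ (equivalent to $r^{-1}+s^{-1}<2/n$), I would invoke formula 3.252.10 of Gradshteyn--Ryzhik with parameters $\mu=n/2$, $\nu=\alpha/2$, which evaluates this integral in closed form via the Legendre function $P_{n/2-\alpha/2-1/2}^{1/2-\alpha/2}(-\cos\theta)$. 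This produces precisely the factor $I(n,\alpha,\theta)$ of \eqref{eq:normhal}, so that $\|h(\cdot;\lambda)\|_{\alpha} = \pi^{n/(2\alpha)}|\lambda|^{n/(2\alpha)-1}I(n,\alpha,\theta)$.

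Combining the two displays and invoking the eigenvalue criterion $\|Q(\lambda)\|\ge 1$ from Section \ref{sec:preliminaries} forces
$$1 \;\le\; (4\pi)^{-n/(2\alpha)}\,I(n,\alpha,\theta)\,|\lambda|^{n/(2\alpha)-1}\,\|a\|_{r}\,\|b\|_{s},$$
and raising to the $\alpha$-th power delivers \eqref{eq:result5} with the announced constant $C(n,\alpha,\theta)=(4\pi)^{-n/2}I(n,\alpha,\theta)^{\alpha}$. The one place where I expect to have to be careful is the bookkeeping for the Legendre-function integral: one must check that the admissibility conditions $0<\realpart\mu<\realpart 2\nu$ and $|\arg|<\pi$ of 3.252.10 hold under the assumptions of Theorem \ref{thm:result4}, and correctly match the sign in $-2t\cos\theta$ against the $+2x\cos t$ appearing in the formula, which is what produces the argument $-\cos\theta$ of $P_{\cdot}^{\cdot}$. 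The remaining operator-theoretic steps are essentially a rerun of the argument for Theorem \ref{thm:result4}.
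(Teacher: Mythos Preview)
Your proposal is correct and follows essentially the same route as the paper: retain the operator chain from Theorem \ref{thm:result4} and replace the crude bound on $\|h(\cdot;\lambda)\|_{\alpha}$ by its exact computation via the substitution $\rho^{2}=|\lambda|t$ and formula 3.252.10 of Gradshteyn--Ryzhik, then invoke $\|Q(\lambda)\|\ge 1$ and raise to the $\alpha$-th power. The only (harmless) slip is in your first display, where the raw Hausdorff--Young constants give $(2\pi)^{-n/\alpha}$ rather than $(4\pi)^{-n/(2\alpha)}$; the latter appears only after the factor $\pi^{n/(2\alpha)}$ from $\|h(\cdot;\lambda)\|_{\alpha}$ is absorbed, so your final inequality is unaffected.
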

\begin{rem}\label{rem:res5}
The estimate \eqref{eq:result4}  and, of course, \eqref{eq:result5} as well can be improved upon the constant $A_{\beta} A_{\gamma^{'}}   ( = (\beta^{1/\beta}  \gamma^{' 1/\gamma^{'}} / \beta^{' 1/\beta^{'}} \gamma^{1/\gamma})^{n/2})$  due to the sharp form of the Hausdorff-Young theorem \cite{babenko} (cf. Remark \ref{rem:vdash}).
\end{rem}

\section{Polyharmonic operators}\label{sec:generalization}

We will extend the estimates  established previously to the operators of  the form 
$$H = (- \Delta)^{m} + q $$
in which (the potential) $q$ is a complex-valued function, and $m$  is an arbitrary positive  real number.
Unperturbed operator
$$H_{0}  =  (- \Delta)^{m}$$
can be comprehend, as
$$(H_{0} u)   (x) =  \int_{\reals^{n}}  | \xi |^{2 m}  \hat{u} (\xi) e^{- i \langle x, \xi \rangle} \ud \xi $$
defined,  for instance,   in $L_{2} (\reals^{n})$  on its  maximal domain consisting of all functions $u \in  L_{2} (\reals^{n})$  such that $H_{0} u \in  L_{2} (\reals^{n})$   (or, what is the same, $\hat{v}$  determined by  $\hat{v} (\xi) =  | \xi |^{2 m}  \hat{u} (\xi) $ belongs to $L_{2} (\reals^{n})$);  $\hat{u}$ denotes  the Fourier transform of $u$.  $H_{0}$ can be treated  upon a unitary equivalence (by the Fourier  transform) as the operator of multiplication by $ | \xi |^{2 m}$.  

In the space $ L_{p}  (\reals^{n})  \ ( 1 < p < \infty)$	   the operator $H$ can   be viewed as an elliptic operator  of  order  $2 m$ defined  on its domain the Sobolev space  $ W_{p}^{2 m}  (\reals^{n}).$  As  in preceding sections we assume that the potential $q$ admits a factorization $q = ab$  with $a, b$ for which conditions \eqref{eq:Ma}, \eqref{eq:Mb},  but with $0 < \nu < p^{'}  \kappa$      and $0 < \mu < p (m - \kappa)$ for some   $0 < \kappa < m,$ and \eqref{eq:qdecays} are satisfied.  Under  these conditions the operator  $(- \Delta)^{m} + q$  admits a closed extension $H$, let us denote it by  $H_{m, q}$,   to which the approach   for the evaluation of perturbed eigenvalues  proposed in Section \ref{sec:preliminaries} is applied.

Thus, in order to obtain estimation for the norm of $B R (\lambda; H_{0}) A$  (the operators $A, B$ are  defined as in previous subsections), we can use the relation \eqref{eq:br}, where
$$\widehat{R (\lambda; H_{0})}  \hat{u} (\xi) = (  | \xi |^{2 m} - \lambda)^{- 1}    \hat{u} (\xi), \quad \xi \in  \reals^{n}.$$

The arguments used in proving Theorems \ref{thm:result4} and \ref{thm:result5} can be applied, and as is seen we have only  to evaluate,  for appropriate $\alpha > 0,$ the $L_{\alpha}$-norm  of the function $h_{m} (\cdot; \lambda)$   defined by 
$$h_{m} (\xi; \lambda)  =    (  | \xi |^{2 m} - \lambda)^{- 1}, \quad \xi \in  \reals^{n}.$$

For any $\alpha, 0 < \alpha < \infty,$  we have  

$$\| h_{m} (\cdot; \lambda) \|_{\alpha}^{\alpha}  =  \int_{\reals^{n}}  \frac{d \xi}{| | \xi |^{2 m} - \lambda|^{\alpha}}   =       \frac{2 \pi^{n/2}}{\Gamma (n/2)}    \int_{0}^{\infty}    \frac{\rho^{n - 1}}{| \rho^{2 m} - \lambda |^{\alpha}}  \ud \rho.$$

  Writing $\lambda = | \lambda | e^{i \theta}  (0 < \theta < 2 \pi)$  and making the substitution $\rho^{2 m} = | \lambda | t,$ we obtain 
	
$$\int_{0}^{\infty}    \frac{\rho^{n - 1}}{| \rho^{2 m} - \lambda |^{\alpha}}  \ud \rho  =  
  \frac{1}{2 m}  | \lambda |^{n/2m - \alpha}   \int_{0}^{\infty}    \frac{t^{n/2m - 1}}{(t^{2} - 2 t \cos \theta + 1)^{\alpha/2}} \ud t.$$

  Assuming $n/2m < \alpha$  we apply again the formula (\cite{gradshteyn-ryzhik}; 3.252.10.), and obtain
	
Hence,
\begin{equation}\label{eq:hm}
\| h_{m} (\cdot; \lambda) \|_{\alpha}^{\alpha}  =  \frac{2 \pi^{n/2}}{\Gamma (n/2)}  \cdot \frac{1}{2 m}  | \lambda |^{n/2 m - \alpha}  I_{m} (n, \alpha, \theta),
\end{equation}
where 
$$ I_{m} (n, \alpha, \theta) =    (2 \sin  \theta)^{\alpha/2 - 1/2}  \Gamma (\alpha/2 + 1/2)  B (n/2 m, \alpha - n/2m)     P_{n/2m - \alpha/2 - 1/2}^{1/2 - \alpha/2}  (- \cos \theta).$$

Collecting all  evaluations we obtain the following result.

\begin{thm}\label{thm:result6}
Let  $1 < p < \infty,   m > 0$,   and let  $q = a b,$  where $a \in L_{r} (\reals^{n} )$,  $b \in  L_{s} (\reals^{n})$ for $r, s$ satisfying $0 < r \leq \infty,$ $0 <  s \leq \infty$, $2^{-1} -  p^{- 1} \leq r^{- 1} \leq 1 - p^{- 1},$ \quad $- 2^{- 1}  +  p^{- 1} \leq s^{- 1}  \leq  p^{- 1},$    and  $ r^{- 1} + s^{- 1} < 2 m n.$ 
Then, for  any eigenvalue $\lambda \in  \complex \setminus [0, \infty)$   of the  operator $H_{m, q}$, considered acting in   $L_{p} (\reals^{n}),$ there holds
\begin{equation}\label{eq:result6}
| \lambda |^{\alpha - n/2 m}  \leq C (n, m, \alpha, \theta)  \| a \|_{r}^{\alpha}    \| b \|_{s}^{\alpha},
\end{equation}
where    $C (n, m, \alpha, \theta)  =  (4 \pi)^{- n/2}  (m \Gamma (n/2))^{- 1}   I_{m}  (n, \alpha, \theta),  \quad  I_{m}  (n, \alpha, \theta)$   is determined as  in \eqref{eq:hm}, and $\alpha = (r^{- 1} + s^{- 1})^{- 1}.$
\end{thm}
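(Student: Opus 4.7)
The plan is to repeat the Fourier-space argument of Theorems \ref{thm:result4} and \ref{thm:result5} verbatim, replacing the Laplacian symbol $|\xi|^2-\lambda$ by the polyharmonic symbol $|\xi|^{2m}-\lambda$. Via the identity \eqref{eq:br}, the operator $Q(\lambda)=B R(\lambda;H_0)A$ factors as
$$L_p \xrightarrow{A} L_\beta \xrightarrow{F} L_{\beta'} \xrightarrow{\,\cdot\, h_m(\cdot;\lambda)} L_{\gamma'} \xrightarrow{F^{-1}} L_\gamma \xrightarrow{B} L_p,$$
where $h_m(\xi;\lambda)=(|\xi|^{2m}-\lambda)^{-1}$, and the auxiliary exponents are fixed by $\beta^{-1}=r^{-1}+p^{-1}$, $p^{-1}=s^{-1}+\gamma^{-1}$, $\alpha^{-1}=r^{-1}+s^{-1}$, so that $\gamma'^{-1}=\alpha^{-1}+\beta'^{-1}$. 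Hölder bounds on $A$, $B$ and on the multiplier supply the factors $\|a\|_r$, $\|b\|_s$, and $\|h_m(\cdot;\lambda)\|_\alpha$, while the Hausdorff-Young theorem supplies the $(2\pi)$-factors for $F$ and $F^{-1}$; the hypotheses $2^{-1}-p^{-1}\le r^{-1}\le 1-p^{-1}$ and $-2^{-1}+p^{-1}\le s^{-1}\le p^{-1}$ are exactly what put $\beta$ and $\gamma'$ in $[1,2]$ so that the Hausdorff-Young theorem applies.

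The substantive computation is $\|h_m(\cdot;\lambda)\|_\alpha$. Passing to polar coordinates and substituting $\rho^{2m}=|\lambda|t$, with $\lambda=|\lambda|e^{i\theta}$ and $0<\theta<2\pi$, reduces the integral to
$$\|h_m(\cdot;\lambda)\|_\alpha^\alpha = \frac{2\pi^{n/2}}{\Gamma(n/2)}\cdot\frac{1}{2m}\,|\lambda|^{n/2m-\alpha}\int_0^\infty \frac{t^{n/2m-1}}{(t^2-2t\cos\theta+1)^{\alpha/2}}\,\ud t.$$
The exponent condition $r^{-1}+s^{-1}<2m/n$ is nothing other than $n/(2m)<\alpha$, and this secures both the convergence at infinity and the applicability of formula 3.252.10 of \cite{gradshteyn-ryzhik} with $\mu=n/(2m)$ and $\nu=\alpha/2$. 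Evaluating the resulting radial integral produces the closed form $I_m(n,\alpha,\theta)$ of \eqref{eq:hm}, involving the associated Legendre function $P_{n/2m-\alpha/2-1/2}^{1/2-\alpha/2}(-\cos\theta)$.

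Collecting the five bounds then gives
$$\|Q(\lambda)\|\le C(n,m,\alpha,\theta)^{1/\alpha}\,|\lambda|^{n/(2m\alpha)-1}\,\|a\|_r\|b\|_s,$$
with $C(n,m,\alpha,\theta)$ as in the statement, the various powers of $2\pi$ and $m$ collapsing exactly as in the derivation of Theorem \ref{thm:result5}. Since a regular point of $H_0$ that is an eigenvalue of $H_{m,q}$ must satisfy $\|Q(\lambda)\|\ge 1$ (Section \ref{sec:preliminaries}), raising both sides to the $\alpha$-th power produces \eqref{eq:result6}. The main obstacle is purely algebraic bookkeeping: tracking how the Hausdorff-Young constants, the factor $|\lambda|^{n/2m-\alpha}$ from the substitution, and the Legendre/gamma factors from 3.252.10 combine into the single constant $C(n,m,\alpha,\theta)$, and verifying that the assumptions on $r,s,p$ match exactly the three integrability requirements $\beta\in[1,2]$, $\gamma'\in[1,2]$, and $\alpha>n/(2m)$. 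No new idea beyond those of Theorem \ref{thm:result5} is needed.
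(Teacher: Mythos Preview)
Your proposal is correct and follows essentially the same route as the paper: the paper explicitly says that the arguments of Theorems \ref{thm:result4} and \ref{thm:result5} carry over verbatim once $h$ is replaced by $h_m$, and then performs exactly the polar-coordinate computation and substitution $\rho^{2m}=|\lambda|t$ you describe, invoking the same formula 3.252.10 of \cite{gradshteyn-ryzhik} under the condition $n/2m<\alpha$ to arrive at \eqref{eq:hm}. Your identification of the exponent bookkeeping (that the hypotheses on $r,s$ place $\beta,\gamma'\in[1,2]$ and that $r^{-1}+s^{-1}<2m/n$ is precisely $\alpha>n/2m$) matches the paper's reasoning exactly.
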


As a consequence of Theorem \ref{thm:result6}  we have  a result analogous to that given by Corollary \ref{cor:gen}.
\begin{cor}\label{cor:res6}
Let $\gamma > 0$ for $n \geq 2 m$ and $\gamma \geq 1 - n/2 m$ for $n < 2 m.$  If $q \in L_{\gamma + n/2 m} (\reals^{n} ),$ then every eigenvalue $\lambda \in \complex \setminus [0, \infty)$  of the operator $H_{m, q}$    defined in $ L_{2} (\reals^{n} )$ satisfies
\begin{equation}\label{eq:corres6}
| \lambda |^{\gamma}  \leq C (n, m, \alpha, \theta) \int_{\reals^{n}}  | q (x) |^{\gamma + n/2 m}   \ud x,
\end{equation}
where  $C (n, m, \alpha, \theta)$ is as in (2.54).
\end{cor}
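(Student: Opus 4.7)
The plan is to derive Corollary \ref{cor:res6} as a direct specialization of Theorem \ref{thm:result6}, in the same spirit in which Corollary \ref{cor:q1} was obtained from Theorem \ref{thm:result1} and Corollary \ref{cor:gen} from Theorem \ref{thm:result4}. The goal is to choose the factorization of $q$ and the integrability indices so that the abstract inequality
\[
|\lambda|^{\alpha - n/2m} \leq C(n,m,\alpha,\theta)\,\|a\|_{r}^{\alpha}\,\|b\|_{s}^{\alpha}
\]
collapses to the desired scalar bound involving $\int |q|^{\gamma + n/2m}\ud x$.

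First I would set $p = 2$, since the operator acts in $L_{2}(\reals^{n})$, and use the symmetric factorization
\[
a(x) = |q(x)|^{1/2}, \qquad b(x) = (\sign q(x))\,|q(x)|^{1/2},
\]
so that $ab = q$ and $|a| = |b| = |q|^{1/2}$. Then I would take $r = s = 2\gamma + n/m$; with this choice the exponent $\alpha = (r^{-1}+s^{-1})^{-1}$ equals $r/2 = \gamma + n/2m$, and the left-hand exponent $\alpha - n/2m$ of \eqref{eq:result6} becomes exactly $\gamma$. Moreover, since $\alpha/r = 1/2$, the right-hand side satisfies
\[
\|a\|_{r}^{\alpha}\,\|b\|_{s}^{\alpha}
= \biggl(\int_{\reals^{n}} |q(x)|^{\gamma+n/2m}\ud x\biggr)^{1/2}\biggl(\int_{\reals^{n}} |q(x)|^{\gamma+n/2m}\ud x\biggr)^{1/2}
= \int_{\reals^{n}} |q(x)|^{\gamma+n/2m}\ud x,
\]
which yields precisely the integral on the right of \eqref{eq:corres6}.

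Next I would verify that the hypotheses of Theorem \ref{thm:result6} are satisfied. With $p = 2$ and $r = s$, the conditions $2^{-1}-p^{-1}\leq r^{-1}\leq 1-p^{-1}$ and $-2^{-1}+p^{-1}\leq s^{-1}\leq p^{-1}$ both collapse to the single requirement $r \geq 2$, i.e., $2\gamma + n/m \geq 2$, which is precisely $\gamma \geq 1 - n/2m$; this is automatic when $n \geq 2m$ (the right-hand side is then non-positive) and is imposed as the explicit hypothesis when $n < 2m$. The remaining constraint $r^{-1}+s^{-1} < 2m/n$ amounts to $r > n/m$, that is $\gamma > 0$, which is assumed throughout. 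The constant $C(n,m,\alpha,\theta)$ in \eqref{eq:corres6} is then read off from Theorem \ref{thm:result6} with $\alpha = \gamma + n/2m$.

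There is essentially no obstacle in this argument; it is a parameter-matching exercise that parallels the derivation of Corollary \ref{cor:q1} from Theorem \ref{thm:result1}. The only slightly delicate point is the boundary case $\gamma = 1 - n/2m$ when $n < 2m$, which corresponds to $r = 2$, i.e.\ the endpoint of the Hausdorff--Young range used inside the proof of Theorem \ref{thm:result6}; however, the conditions in that theorem are stated with non-strict inequalities precisely so that this endpoint is admissible, and no additional argument is required.
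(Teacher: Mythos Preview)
Your proposal is correct and follows exactly the approach the paper intends: the corollary is stated as a direct consequence of Theorem~\ref{thm:result6}, obtained (just as in Corollaries~\ref{cor:q1} and~\ref{cor:gen}) by taking $p=2$, the symmetric factorization $a=|q|^{1/2}$, $b=(\sign q)|q|^{1/2}$, and $r=s=2\gamma+n/m$, so that $\alpha=\gamma+n/2m$. Your verification of the parameter constraints, including the endpoint $r=2$, matches the hypotheses as stated.
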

\begin{rem}\label{rem:res6}
Similarly, as for estimates \eqref{eq:result4}  and \eqref{eq:result5},  the estimate \eqref{eq:result6}  and hence \eqref{eq:corres6} can be improved upon  the constant $A_{\beta} A_{\gamma^{'}}$   (see Remark \ref{rem:res5}).
\end{rem}


\section*{Acknowledgments}

The author wishes to express her gratitudes to Professor Ari Laptev for formulating the problem and for many useful discussions.

\bibliography{complex}
\bibliographystyle{alpha}

\end{document}